\documentclass[a4paper,article,11pt]{article}
\textheight 230mm
\textwidth 160mm
\hoffset -16mm
\voffset -16mm

\usepackage{amssymb}
\usepackage{amstext}
\usepackage{amsmath}
\usepackage{amscd}
\usepackage{amsthm}
\usepackage{amsfonts}
\usepackage{enumerate}
\usepackage{latexsym}
\usepackage{mathrsfs}
\usepackage{mathtools}
\usepackage{tikz}

\makeatletter

\@addtoreset{equation}{section}
\makeatother

\newtheorem{theorem}{Theorem}[section]
\newtheorem{lemma}[theorem]{Lemma}

\newtheorem{remark}[theorem]{Remark}

\DeclareMathOperator{\wn}{Wind}

\DeclareMathOperator{\supp}{supp}

\DeclareMathOperator{\Index}{Index}

\DeclareMathOperator{\ac}{ac}

\DeclareMathOperator{\pp}{p}

\newcommand{\strong}{\mathop{\rm s\mathchar`-lim}\limits}

\def\C{\mathbb C}
\def\N{\mathbb N}
\def\S{\mathbb S}
\def\R{\mathbb R}
\def\Z{\mathbb Z}
\def\A{\mathcal A}

\def\d{\mathrm d}

\def\e{\mathrm e}
\def\E{\mathcal E}
\def\F{\mathscr F}
\def\H{\mathcal H}

\def\J{\mathcal J}
\def\K{\mathcal K}

\def\U{\mathcal U}
\def\W{\mathcal W}

\def\V{\mathbb V}

\begin{document}
\title{Explicit formula for Schr\"odinger wave operators on the half-line for potentials up to optimal decay}
\author{H. Inoue}

\date{\small}
\maketitle \vspace{-1cm}

\begin{quote}
\emph{
\begin{itemize}
\item[] Graduate school of mathematics, Nagoya University,
Chikusa-ku, Nagoya 464-8602, Japan
\item[] \emph{E-mail:} m16007v@math.nagoya-u.ac.jp
\end{itemize}
}
\end{quote}

\maketitle

\begin{abstract}
We give an explicit formula for the wave operators for Schr\"odinger operators on the half-line with a potential decaying strictly faster than the polynomial of degree minus two. The formula consists of the main term given by the scattering operator and a function of the generator of the dilation group, and a Hilbert-Schmidt remainder term. Our method is based on the elementary construction of the generalized Fourier transforms in terms of the solutions of the Volterra integral equations. As a corollary, a topological interpretation of Levinson's theorem is established via an index theorem approach.
\end{abstract}

\textbf{2010 Mathematics Subject Classification:} 34L25, 47A40
\smallskip

\textbf{Keywords:} Scattering theory, wave operators, Volterra integral equation, index theorem.

\section{Introduction}

Common objects in mathematical scattering theory are the \emph{{\rm (}M\o ller{\rm )} wave operators}. In the time-dependent framework, they are defined by the strong limits
$$
W_{\pm}\equiv W_{\pm}(H,H_0):=\strong_{t\to\pm\infty}\e^{itH}\e^{-it H_0}{\bf 1}_{\ac}(H_0)
$$
for a pair of self-adjoint operators $(H,H_0)$ on a Hilbert space $\H$ whenever the above limits exist. Here, ${\bf 1}_{\ac}(H_0)$ denotes the spectral projection associated with the absolutely continuous subspace $\H_{\ac}(H_0)$ of $H_0$. $W_{\pm}$ are, whenever they exist,  partial isometries with ranges contained in the absolutely continuous subspace $\H_{\ac}(H)$ of $H$, and therefore it is natural to say $W_{\pm}$ are \emph{complete} if their ranges coincide with $\H_{\ac}(H)$. Important consequences of the existence and completeness are that they give rise to unitary equivalences between $\H_{\ac}(H_0)$ and $\H_{\ac}(H)$, and that the \emph{scattering operator} $S:=W_{+}^*W_-$ is a unitary operator on $\H_{\ac}(H_0)$. 

%%%%%%%%%%%%%%%%%%%%%%%%%%%%%%%%%%%%%%%%
%%%%%%%%%%%%%%%%%%%%%%%%%%%%%%%%%%%%%%%%

After they were introduced by physicists M\o ller and Heisenberg, numerous works have been devoted to the problem of the existence and completeness. As a leading example, mathematical scattering theory is developed for and efficiently applied to the study of Schr\"odinger operators. We can touch only a few aspects of this theory in this paper and we refer to \cite{BW,Y3} for more information.

%%%%%%%%%%%%%%%%%%%%%%%%%%%%%%%%%%%%%%%%
%%%%%%%%%%%%%%%%%%%%%%%%%%%%%%%%%%%%%%%%

A next step in the mathematical scattering theory is to study some structural formulas or further mapping properties of $W_{\pm}$. It is usually based on the stationary approach. We mention that the $L^p$-boundedness of $W_{\pm}$ for a Schr\"odinger operator on $\R^d$ initiated by Yajima \cite{Yajima} goes in this direction. Indeed, the precise high- and low-energy analysis of $W_{\pm}$ in terms of the stationary expression are commonly used. 

 %%%%%%%%%%%%%%%%%%%%%%%%%%%%%%%%%%%%%%%%
 %%%%%%%%%%%%%%%%%%%%%%%%%%%%%%%%%%%%%%%%

In the last 10 years, several new explicit formulas for $W_{\pm}$ have been obtained as for example in \cite{KR,DR} (see also \cite{R} for more information). These works are mostly motivated for giving a topological interpretation as an index theorem to the famous \emph{Levinson's theorem}, which relates the scattering part to the number of bound states of a quantum system. It has been originally established for a Schr\"odinger operator $-\Delta+V(X)$ with a spherically symmetric potential $V$ i.e. $V(\cdot)=v(|\cdot|)$ for some $v:\R_+\to\R$, by N. Levinson.  For the orbital quantum number $\ell=0$, the radial part of the operator reduces to a Schr\"odinger operator of the form $-\d^2/\d x^2+v(X)$ on the half-line $\R_+$ and this relation is often formulated with the \emph{phase shift} $\eta(k)=\eta_{\ell=0}(k)$, i.e. if we set $N$ to be the number of bound states of the system, then the equality
\begin{equation}\label{classical Levinson}
\eta(\infty)-\eta(0)=\pi \left(N+\delta\right)
\end{equation}
holds, where the correction term $\delta=1/2$ if there exists a \emph{zero-energy resonance} (see, Section 2) and $\delta=0$ otherwise. The relation \eqref{classical Levinson} is usually proved based on complex analysis (see \cite[Thm. XI. 59]{RS} or \cite[Thm. 4.6.1]{Y1}), and has been generalized to several quantum systems by many researchers on purely analytical basis. 

 %%%%%%%%%%%%%%%%%%%%%%%%%%%%%%%%%%%%%%%%
 %%%%%%%%%%%%%%%%%%%%%%%%%%%%%%%%%%%%%%%%

In contrast to the usual approach, the topological approach \cite{KR} is based on some algebraic methods including $K$-theory for $C^*$-algebras. Explicit formulas for $W_-$ have been used to prove its affiliation to a $C^*$-subalgebra $\E\subset B(\H)$ with the quotient algebra $\E/K(\H)$ isomorphic to $C(\S^1)$.  A concise introduction to this approach can be found in the review paper \cite{R}. More recently in \cite{IR,NPR}, this approach is generalized to specific potentials of the form $x^{-2}$ on the half-line $\R_+$ with possibly complex coefficients, which create infinitely many negative eigenvalues or finitely many complex eigenvalues.

%%%%%%%%%%%%%%%%%%%%%%%%%%%%%%%%%%%%%%%%
%%%%%%%%%%%%%%%%%%%%%%%%%%%%%%%%%%%%%%%%

The aim of the present paper is to provide an explicit formula for $W_-$ and give a topological interpretation to the relation \eqref{classical Levinson} for the Schr\"odinger operator $H=H_0+v(X)$ with $H_0=-\d^2/\d x^2$ being the Dirichlet Laplacian on $\R_+$.  In this paper, the real-valued potential $v\in L^{\infty}(\R_+)$ is assumed to decay strictly faster than $-2$, i.e. there exist $C>0$ and $\rho>2$ such that
\begin{equation}\label{decay assumption}
|v(x)|\leq C(1+x)^{-\rho}, \qquad\forall x\geq0.
\end{equation}
This assumption is strong enough to ensure the existence and completeness of $W_\pm$ for the pair $(H,H_0)$. It is also well known that $H$ has no singular continuous spectrum for such potential $v$ and has finitely many simple negative eigenvalues. Indeed, $\rho=2$ is the borderline case for the finiteness of the number of bound states.

%%%%%%%%%%%%%%%%%%%%%%%%%%%%%%%%%%%%%%%%
%%%%%%%%%%%%%%%%%%%%%%%%%%%%%%%%%%%%%%%%

We now set $A$ to be the generator of the dilation group defined by $[\e^{-itA}f](x):=\e^{t/2}f(\e^t x)$ for $f\in L^2(\R_+)$ and $x\in\R_+$, and $S$ to be the scattering operator for $(H,H_0)$, in order to state the following main result:
\begin{theorem}\label{expression of wo}
Under the assumption \eqref{decay assumption} with $\rho>2$, the equality
\begin{equation}\label{formula of the wo}
W_{-}={\bf 1}+\phi(A)(S-{\bf 1})+K
\end{equation}
holds, where $\phi(A):=(i\e^{\pi A}+1)^{-1}$ and $K$ is a Hilbert-Schmidt operator on $L^2(\R_+)$. 
\end{theorem}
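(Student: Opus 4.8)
The plan is to realize $W_-$ through the stationary picture and to extract the asserted structure by recognising the leading contribution as a function of the dilation generator $A$. As a preliminary, following the method announced in the abstract, I would construct the \emph{Jost solution} $f(\cdot,k)$ of $-u''+vu=k^2u$ characterised by $f(x,k)\to\e^{ikx}$ as $x\to\infty$, as the unique solution of the Volterra integral equation
\begin{equation*}
f(x,k)=\e^{ikx}+\int_x^\infty \frac{\sin\!\big(k(y-x)\big)}{k}\,v(y)f(y,k)\,\d y.
\end{equation*}
Under \eqref{decay assumption} with $\rho>2$ the standard successive-approximation estimates give existence, analyticity in $k$, and quantitative control of the remainder $g(x,k):=f(x,k)-\e^{ikx}$, of the Jost function $F(k):=f(0,k)$, of its phase $\eta$ (so that $S(k)=F(-k)/F(k)=\e^{2i\eta(k)}$), and of the regularity of $k\mapsto S(k)$ up to the endpoints $k=0,\infty$. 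Assembling the normalised regular solutions $\psi(\cdot,k)=\sqrt{2/\pi}\,k|F(k)|^{-1}\varphi(\cdot,k)$ (with $\varphi(\cdot,k)$ the regular solution, $\varphi(0,k)=0$) yields a unitary generalized Fourier transform $\F$ diagonalising $H$ on $\H_{\ac}(H)$, alongside the sine transform $\F_0$ diagonalising $H_0$.

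The next step is the stationary representation of the wave operator. I would prove that $W_-$ admits the representation $W_-=\F^{\,*}M_{\e^{i\eta}}\F_0$, where $M_{\e^{i\eta}}$ is multiplication by $\e^{i\eta(k)}$; equivalently $W_-=\F_-^{\,*}\F_0$ with $\F_-$ built from the distorted waves $\e^{i\eta(k)}\psi(\cdot,k)$ appropriate to $W_-$, the phase being pinned down by matching the time-dependent limit. Since $\F_0^{\,*}\F_0=\one$, this gives $W_--\one=(\F_-\!-\F_0)^{\,*}\F_0$, an explicit integral operator whose kernel is governed by $\psi(x,k)-\sqrt{2/\pi}\sin(kx)$. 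Splitting $f=\e^{ikx}+g$ separates $\psi$ into a purely \emph{trigonometric} part $\sqrt{2/\pi}\sin\!\big(kx+\eta(k)\big)$ and a \emph{Jost remainder} carrying the factor $g$; this induces a decomposition $W_--\one=T_{\mathrm{main}}+T_{\mathrm{rem}}$.

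The heart of the argument is the evaluation of $T_{\mathrm{main}}$. Conjugating by $\F_0$ and computing the $x$-integrals distributionally, the diagonal ($\delta$-)contributions produce multiplication operators in $k$, while the off-diagonal contributions have kernels proportional to $\mathrm{p.v.}\,(k-k')^{-1}+(k+k')^{-1}$, which are homogeneous of degree $-1$ and hence functions $m(A)$ of the dilation generator; the multiplier is obtained from the Mellin-type integrals $\int_0^\infty t^{i\xi-1/2}(1\mp t)^{-1}\,\d t$. Using $\cos\eta\,\e^{i\eta}=\tfrac12(1+S)$ and $\sin\eta\,\e^{i\eta}=\tfrac1{2i}(S-1)$ and collecting the $A$-dependence on the left, the combination $\tfrac12+\tfrac1{2i}m(A)$ simplifies to exactly $\phi(A)=\tfrac12\big(1-\tanh(\pi A)-i\operatorname{sech}(\pi A)\big)=(i\e^{\pi A}+1)^{-1}$, so that $\F_0 T_{\mathrm{main}}\F_0^{\,*}\equiv\phi(A)(S-\one)$ modulo the commutators incurred in the reordering.

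The main obstacle — and the place where the optimal decay $\rho>2$ is indispensable — is showing that everything discarded above is genuinely \emph{Hilbert--Schmidt} rather than merely compact, so that it can be absorbed into $K$. Concretely I must bound, in Hilbert--Schmidt norm, (i) the Jost remainder $T_{\mathrm{rem}}$ and (ii) each commutator $[\phi(A),M_{S-1}]$ and its relatives arising in the reordering. Both reduce to verifying square-integrability over $\R_+\times\R_+$ of explicit kernels, and both require the decay and regularity of $g(x,k)$ and of $S-1$ (including the correct vanishing, or controlled jump, of $S-1$ at $k=0$ and $k=\infty$) that $\rho>2$ provides; the borderline $\rho=2$ fails precisely here. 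The delicate endpoint is $k=0$, where a zero-energy resonance changes the boundary value $S(0)\in\{1,-1\}$ and thereby the fine structure of the kernels near the diagonal — this must be treated carefully to keep the remainder Hilbert--Schmidt, and it is exactly this endpoint that later encodes the correction term $\delta$ in Levinson's theorem \eqref{classical Levinson}.
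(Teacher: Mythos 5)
Your outline reproduces the paper's skeleton correctly up to the main term: the stationary representation $W_-=\F^{-*}\F_s$, the splitting of the distorted wave into a trigonometric part plus a Jost remainder, and the identification of the main term as a Mellin multiplier, i.e.\ a function of $A$, are all as in the paper. (One simplification you are missing there: the paper's identity $F_1\F_s=\frac{1}{2i}(\F_c\F_s+i{\bf 1})(S-{\bf 1})=\phi(A)(S-{\bf 1})$ is \emph{exact}, because $\F_c\F_s$ commutes with dilations, so no reordering is needed; your plan instead incurs commutators $[\phi(A),M_{S-1}]$ and then has to prove they are Hilbert--Schmidt, an extra and unverified burden requiring quantitative smoothness of $s(k)$ near $k=0$ that is not established under \eqref{decay assumption} alone. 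But this part is repairable.)

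The genuine gap is in your treatment of the Jost remainder, which is the whole point of the theorem at optimal decay. You assert that it ``reduces to verifying square-integrability over $\R_+\times\R_+$ of explicit kernels'' using the decay of $g(x,k)=f(x,k)-\e^{ikx}$. The available pointwise bounds (the paper's \eqref{estimate1} and \eqref{estimate2}) give $|g(x,k)|\leq Ck^{-1}(1+x)^{-(\rho-1)}$ for $k>1$ but only $|g(x,k)|\leq C(1+x)^{-(\rho-2)}$ uniformly down to $k=0$; the low-energy piece is square-integrable in $x$ only when $2(\rho-2)>1$, i.e.\ $\rho>5/2$. This is exactly the threshold the paper points out at the end of Section 3, and it is why the earlier literature (\cite{KR}) stopped short of the borderline. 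For $2<\rho\leq 5/2$ no pointwise estimate on $g$ can close the argument: one must exploit the oscillation in $k$. The paper's Section 5 supplies the missing mechanism, and nothing in your proposal plays its role: iterate the Volterra equation \eqref{Volterra for Jost} $N$ times to write $F_2=(p_N+R_N)\e^{-i\eta}$ as in \eqref{decomp of K_2}, where the $N$-fold iterated part $p_N$ gains decay $(1+x)^{-(N+1)(\rho-2)}$ and is in $L^2$ for $N$ large (Lemma \ref{compactness of p_N}), while each low-order oscillatory term $r_n$ is shown, via changes of variables and repeated integration by parts organized through the $\star$-algebra on $\V_1$ and the maps $\mathfrak{S}_u$ (Lemmas \ref{step1} and \ref{step2}), to factor as a Hilbert--Schmidt kernel $U[V_1,\ldots,V_n]$ times the bounded operator $\Phi\F_s$. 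Without this iteration-plus-factorization scheme (or a substitute exploiting the $k$-oscillation, e.g.\ your one integration by parts in the $n=1$ model must be propagated through all orders, which is precisely what makes the induction nontrivial), your proof only covers $\rho>5/2$ and does not prove the stated theorem. A minor further point: the zero-energy resonance does not actually enter the Hilbert--Schmidt analysis of $K$ — the bound \eqref{estimate2} is insensitive to whether $w(0)=0$; the resonance matters for the winding-number computation in Section 4, not for the remainder.
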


%%%%%%%%%%%%%%%%%%%%%%%%%%%%%%%%%%%%%%%%
%%%%%%%%%%%%%%%%%%%%%%%%%%%%%%%%%%%%%%%%

Note that a quite similar formula for one-dimensional Schr\"odinger operators has already appeared in \cite{KR} but they could not reach the borderline case. Indeed, the potential $v:\R\to\R$ was assumed to satisfy
\begin{equation}\label{l11}
\int_{\R}(1+|x|)^{\alpha}|v(x)|\d x<\infty
\end{equation}
for some $\alpha>5/2$, and this condition means that $v(x)=o(|x|^{-1-\alpha})$ as $|x|\to\infty$ in the power scale. 

%%%%%%%%%%%%%%%%%%%%%%%%%%%%%%%%%%%%%%%%
%%%%%%%%%%%%%%%%%%%%%%%%%%%%%%%%%%%%%%%%

Let us mention that the main difficulty in such a work usually consists in proving that the remainder term is compact. Indeed, the relatively new thing developed in this paper is a new scheme to prove the compactness discussed in Section 5. For our model, the remainder term $K$ is at first defined as a kind of oscillatory integral operator with the kernel in terms of the Jost solution, which is a unique solution of an integral equation (see eq.\eqref{Volterra for Jost}). By iterating that integral equation finitely many times, we reduce this operator to a sum of a good part and bad parts. Then, we obtain a rapid decay of the kernel of the good part due to the iterating process. We can decompose each bad part into a product of a Hilbert-Schmidt operator and a bounded operator by using a little algebraic trick of integration by parts.

%%%%%%%%%%%%%%%%%%%%%%%%%%%%%%%%%%%%%%%%
%%%%%%%%%%%%%%%%%%%%%%%%%%%%%%%%%%%%%%%%

Let us now be more precise on the contents of this paper. In Section 2, we review basic spectral results on the Schr\"odinger operator $H$ on the half-line. In particular, we study the Schr\"odinger equation $-u''+vu=k^2 u$ in terms of the Volterra integral equations. Basic properties of two distinguished solutions, called the regular solution $\varphi(\cdot,k)$ and the Jost solution $\theta(\cdot,k)$, and their Wronskian $w(k)$, called the Jost function, are recalled in detail. In order to make our exposition self-contained, we repeat the relevant materials form \cite[Chapter 4]{Y1} without proofs.

%%%%%%%%%%%%%%%%%%%%%%%%%%%%%%%%%%%%%%%%
%%%%%%%%%%%%%%%%%%%%%%%%%%%%%%%%%%%%%%%%

We will also borrow the stationary expression of the wave operators in terms of generalized Fourier transforms (see Theorem \ref{wave operator}) from that reference. In Section 3, we deduce the above explicit formula for $W^-$ from that expression. Let us mention that the formula implicitly appeared in the proof of the equality between the time-dependent definition and the stationary expression. Indeed, the decomposition of the kernel of the generalized Fourier transform, which is defined by $kw(k)^{-1}\varphi(x,k)$, is the main step. Our formula is obtained by rewriting this decomposition as an operator identity.

%%%%%%%%%%%%%%%%%%%%%%%%%%%%%%%%%%%%%%%%
%%%%%%%%%%%%%%%%%%%%%%%%%%%%%%%%%%%%%%%%

In Section 4, we briefly recall the $C^*$-algebraic framework and formulate the topological version of Levinson's theorem for $H$. Since both analytic and topological indices can be computed explicitly, we will not touch $K$-theory behind the index theorem but more information in this direction can be found in \cite{R}. The interest is pushed forward to the computation of the topological index, which is defined as a winding number of a continuous invertible function on the edge of a square and therefore comes with four contributions, one from each edge. This computation enables us to compare the resulting index theorem with the usual Levinson's theorem \eqref{classical Levinson}.

%%%%%%%%%%%%%%%%%%%%%%%%%%%%%%%%%%%%%%%%
%%%%%%%%%%%%%%%%%%%%%%%%%%%%%%%%%%%%%%%%

In Section 5, the proof of the compactness of the remainder term is provided, and as we have already mentioned, this part is the main contribution of this paper.  Based on the classical techniques of integral equations as in \cite{AK,DF,GNP}, we study the integral kernels algebraically by playing with trigonometric functions and integration by parts. For the convenience of the reader, we perform some computations explicitly.

%%%%%%%%%%%%%%%%%%%%%%%%%%%%%%%%%%%%%%%%
%%%%%%%%%%%%%%%%%%%%%%%%%%%%%%%%%%%%%%%%

As a final remark, let us mention that in the paper \cite{DF}, the $L^p$-boundedness of the wave operators for one-dimensional Schr\"odinger operators for $1<p<\infty$ is proved under the assumption \eqref{l11} with $\alpha=1$. We will not develop this point in this paper, but it is natural to wonder if we can deduce such property of the wave operators in terms of our formula. 

%%%%%%%%%%%%%%%%%%%%%%%%%%%%%%%%%%%%%%%%%%%%%%%%%%%%%%
%%%%%%%%%%%%%%%%%%%%%%%%%%%%%%%%%%%%%%%%%%%%%%%%%%%%%%

\subsection*{Acknowledgement}
This work is supported by JSPS KAKENHI Grant number JP18J21491. The author gratefully acknowledges the many helpful suggestions of professor S. Richard during the preparation of this paper, and has also greatly benefited from discussions with N. Tsuzu.

%%%%%%%%%%%%%%%%%%%%%%%%%%%%%%%%%%%%%%%%%%%%%%%%%%%%%%
%%%%%%%%%%%%%%%%%%%%%%%%%%%%%%%%%%%%%%%%%%%%%%%%%%%%%%

\section{Preliminaries}

Based on \cite[Chapter 4]{Y1}, let us recall some well-known facts about the \emph{regular solution} $\varphi(x,\zeta)$ and the \emph{Jost solution} $\theta(x,\zeta)$ of the Schr\"odinger equation
\begin{equation}\label{schrodinger equation}
-u''(x)+v(x)u(x)=zu(x),\qquad x>0 
\end{equation}
for $z=\zeta^2$ with $\zeta\in\C\setminus\{0\}$. Here, the real-valued potential $v$ satisfies \eqref{decay assumption} with $\rho>2$. The direct approach to scattering theory reviewed in this section can be applied to a more general class of potentials, for example one can admit slower decays at infinity or some singularities at $x=0$. However, we assume \eqref{decay assumption} throughout the present paper to simplify our presentation. Note that one easily infers that the operator $H:=H_{0}+v(X)$ with $H_{0}=-\d^2/\d x^2$ being the Dirichlet Laplacian on $\R_+$, is self-adjoint in $L^2(\R_+)$ with the same domain as $H_0$. 

%%%%%%%%%%%%%%%%%%%%%%%%%%%%%%%%%%%%%%%%
%%%%%%%%%%%%%%%%%%%%%%%%%%%%%%%%%%%%%%%%

Recall that the \emph{regular solution} $\varphi(\cdot,\zeta)$ is the solution of \eqref{schrodinger equation} satisfying the following boundary condition at $x=0$:
\begin{equation}\label{regular boundary condition} 
\varphi(0,\zeta)=0\qquad \text{and}\qquad \varphi'(0,\zeta)=1,
\end{equation}
where $'$ denotes the derivative with respect to the variable $x$. In the free case, when the potential $v$ is identically zero, the regular solution is given by $\varphi_0(x,\zeta)=\zeta^{-1}\sin(\zeta x)$. For a function $\varphi(\cdot,\zeta)$ satisfying \eqref{regular boundary condition}, the Schr\"odinger equation is equivalent to the following Volterra integral equation:
\begin{equation}\label{Volterra for regular}
\varphi(x,\zeta)=\varphi_0(x,\zeta)+\frac{1}{\zeta}\int_{0}^x\sin\bigl(\zeta(x-y)\bigr)v(y)\varphi(y,\zeta)\d y,\qquad x\geq0.
\end{equation}
By applying the method of successive approximations, one can prove that there exists a unique solution $\varphi(\cdot,\zeta)$ of \eqref{Volterra for regular} for any $\zeta\in\C$, including the point $\zeta=0$. Note also that for each fixed $x\geq0$ this solution satisfies $\varphi(x,\zeta)=\varphi(x,-\zeta)$, and that the function $\varphi(x,\zeta)$ is an entire function of the variable $z=\zeta^2$ \cite[Lem. 4.1.2]{Y1}.

%%%%%%%%%%%%%%%%%%%%%%%%%%%%%%%%%%%%%%%%
%%%%%%%%%%%%%%%%%%%%%%%%%%%%%%%%%%%%%%%%

For $\Im(\zeta)$ with $\zeta\neq0$, the \emph{Jost solution} $\theta(\cdot,\zeta)$ is the solution of \eqref{schrodinger equation} satisfying the following boundary condition at infinity:
\begin{equation}\label{Jost boundary condition} 
\theta(x,\zeta)=\e^{i\zeta x}\bigl(1+o(1)\bigr)\ \ \ \ \text{and}\ \ \ \theta'(x,\zeta)=i\zeta \e^{i\zeta x}\bigl(1+o(1)\bigr)\qquad \text{as}\ x\to\infty.
\end{equation}
Note that $\theta_0(x,\zeta):=\e^{i\zeta x}$ is the Jost solution in the free case, and the corresponding integral equation is
\begin{equation}\label{Volterra for Jost}
\theta(x,\zeta)=\theta_0(x,\zeta)+\frac{1}{\zeta}\int_{x}^{\infty}\sin\bigl(\zeta(x-y)\bigr)v(y)\theta(y,\zeta)\d y,\qquad x\geq0.
\end{equation}
Similarly, one can prove that there exists a unique solution $\theta(\cdot,\zeta)$ of \eqref{Volterra for Jost} for any $\zeta\neq0$ in the closure of the upper half-plane $\C_+:=\{\zeta\in\C\mid \Im(\zeta)>0\}$, and also that for each fixed $x\geq0$, $\theta(x,\zeta)$ is an analytic function of $\zeta$ in the upper-half plane, and continuous up to the real axis except $\zeta=0$ \cite[Lem. 4.1.4]{Y1}. The following estimate is also proved in the same lemma: for any $k_0>0$ there is $C>0$ such that for any $|\zeta|>k_0$ and $x\geq0$
\begin{equation}\label{estimate1}
|p(x,\zeta)|\leq \frac{C\e^{-\Im(\zeta)x}}{|\zeta|}\int_{x}^{\infty}|v(y)|\d y,
\end{equation}
where $p(x,\zeta):=\theta(x,\zeta)-\theta_0(x,\zeta)$.

%%%%%%%%%%%%%%%%%%%%%%%%%%%%%%%%%%%%%%%%
%%%%%%%%%%%%%%%%%%%%%%%%%%%%%%%%%%%%%%%%

Since the function $x\mapsto xv(x)$ still belongs to $L^1(\R_+)$, one gets the following estimate for the low-energy \cite[Lem. 4.3.1]{Y1}: for fixed $x\geq0$ the Jost solution $\theta(x,\zeta)$ is continuous as $\zeta\to0$, $\Im(\zeta)\geq0$, and there exists $C>0$ such that for any $x\geq0$ and $\Im(\zeta)\geq 0$
$$
|p(x,\zeta)|\leq \e^{-\Im(\zeta)x}\left\{\exp\left(C\int_{x}^{\infty}y|v(y)|\d y\right)-1\right\}.
$$
One can easily infer from the Taylor expansion of the function $t\mapsto\e^t$ that there exists $C'>0$ such that
\begin{equation}\label{estimate2}
|p(x,\zeta)|\leq C'\int_{x}^{\infty}y|v(y)|\d y.
\end{equation}
Moreover, $\theta(x):=\theta(x,0)$ is a solution of the Schr\"odinger equation
\begin{equation}
-\frac{\d^2}{\d x^2}u(x)+v(x)u(x)=0,\qquad x>0.
\end{equation}

%%%%%%%%%%%%%%%%%%%%%%%%%%%%%%%%%%%%%%%%
%%%%%%%%%%%%%%%%%%%%%%%%%%%%%%%%%%%%%%%%

In the following, we assume $\Im(\zeta)\geq0$, and we write $k$ instead of $\zeta$ if $\Im(\zeta)=0$ and $\zeta>0$. The Wronskian $w(\zeta):=\varphi'(x,\zeta)\theta(x,\zeta)-\varphi(x,\zeta)\theta'(x,\zeta)$ is called the \emph{Jost function}. Since it is independent of the variable $x$, it follows from the boundary condition \eqref{regular boundary condition} that $w(\zeta)=\theta(0,\zeta)$. By using the uniqueness of the solutions $\varphi(\cdot,k)$ and $\theta(\cdot,k)$, one has
\begin{equation*}
\theta(x,-k)=\overline{\theta(x,k)}\qquad \text{whence}\qquad w(-k)=\overline{w(k)},
\end{equation*}
and 
\begin{equation}\label{regular by Jost}
\varphi(x,k)=\frac{1}{2ik}\Bigl(\theta(x,k)w(-k)-\theta(x,-k)w(k)\Bigr).
\end{equation}
Therefore, $w(k)\neq0$ for $k>0$ since otherwise it would follow from \eqref{regular by Jost} that $\varphi(\cdot,k)$ is identically $0$. Hence, by continuity of the function $w$, there exists a unique continuous function $\eta:\R_+\to\R$ satisfying $\displaystyle\lim_{k\to+\infty}\eta(k)=0$ such that for any $k>0$
\begin{equation*}
w(k)=A(k)\e^{i\eta(k)}\qquad \text{with}\qquad A(k):=|w(k)|.
\end{equation*}
The coefficient $A(k)$ is called the limit amplitude and $\eta(k)$ is called the \emph{phase shift} associated with $v$. Moreover, the \emph{scattering matrix} is now defined by
\begin{equation}\label{scattering matrix}
s(k):=\frac{w(-k)}{w(k)}=\e^{-2i\eta(k)},\qquad  k>0.
\end{equation}

%%%%%%%%%%%%%%%%%%%%%%%%%%%%%%%%%%%%%%%%
%%%%%%%%%%%%%%%%%%%%%%%%%%%%%%%%%%%%%%%%

It is known \cite[Lem. 4.2.2]{Y1} that the Jost function $w:\overline
{\C_{+}}\setminus\{0\}\to\C$ has complex zeros only on $i\R_+$, and they are simple. Moreover, $w(\zeta)=0$ if and only if $\lambda=\zeta^2$ is an eigenvalue of $H$. Indeed, if $w(\zeta)=0$ then the Jost solution $\theta(\cdot,\zeta)$ of $H$ is the eigenfunction with eigenvalue $\lambda=\zeta^2$. It is also well known that $H$ has only finitely many negative eigenvalues (see, \cite[Rem. 3.8, 182p]{Y1}) 

%%%%%%%%%%%%%%%%%%%%%%%%%%%%%%%%%%%%%%%%
%%%%%%%%%%%%%%%%%%%%%%%%%%%%%%%%%%%%%%%%

As a consequence, $k=0$ is the only possible real zero of the Jost function $w$. Since $\theta(\cdot,0)$ is not a $L^2$-function but satisfies the Schr\"odinger equation with energy $\lambda=0$, we say that $H$ has a \emph{zero-energy resonance} if $w(0)=0$. It is also called a \emph{half-bound state}  since it contributes to the classical Levinson's theorem \eqref{classical Levinson} as a correction term $\delta$ by $1/2$. Note that the zero-energy resonance also affects on the low-energy asymptotic of the scattering matrix $s(k)$. More precisely, as $k\downarrow 0$
\begin{align*}
s(k)=\begin{cases}
\ \ 1+o(1) & \text{\rm if}\ w(0)\neq 0,\\
-1+o(1) & \text{\rm if}\ w(0)=0,
\end{cases}
\end{align*}
see \cite[Prop. 4.3.4 and Prop. 4.3.9]{Y1}.

%%%%%%%%%%%%%%%%%%%%%%%%%%%%%%%%%%%%%%%%%%%%%%%%%%%%%%
%%%%%%%%%%%%%%%%%%%%%%%%%%%%%%%%%%%%%%%%%%%%%%%%%%%%%%

\section{Formula for the wave operator $W_-$}

We turn now to the scattering theory for the Schr\"odinger operator $H$. In our situation, $W_{-}$ (and $W_{+}$ resp.) exists, and as we will see below, maps $\psi_0(x,k)=\sin(kx)$ to the wave function
\begin{equation*}
\psi^{-}(x,k):=kw(k)^{-1}\varphi(x,k)\ \ \Bigl(\text{and}\ \psi^{+}(x,k):=\overline{\psi^{-}(x,k)}\ \text{resp.} \Bigr).
\end{equation*}
More precisely, $W_\pm$ map a wave packet of the form
\begin{equation}\label{sine transform}
[\F_{s}g](x):=\sqrt{\frac{2}{\pi}}\int_{0}^{\infty}\sin(kx)g(k)\d k,\qquad \forall x\geq 0
\end{equation}
to the wave packets
\begin{equation}\label{generalized Fourier transform}
[\F^{\pm*}g](x):=\sqrt{\frac{2}{\pi}}\int_{0}^{\infty}\psi^{\pm}(x,k)g(k)\d k,\qquad \forall k\geq 0
\end{equation}
for $g\in C_{\rm c}^{\infty}(\R_+)$.

%%%%%%%%%%%%%%%%%%%%%%%%%%%%%%%%%%%%%%%%
%%%%%%%%%%%%%%%%%%%%%%%%%%%%%%%%%%%%%%%%

The equation \eqref{sine transform} defines the \emph{Fourier sine transform} $\F_{s}$, which continuously extends to a self-adjoint unitary operator on $L^2(\R_+)$. It also satisfies the intertwining relation $\F_{s}H_0=L\F_{s}$, where $L$ is the multiplication operator by the function $k\mapsto k^2$. On the other hand, the \emph{generalized Fourier transforms} $\F^{\pm}$ associated with $H$ defined by
\begin{equation*}
[\F^{\pm}f](k):=\sqrt{\frac{2}{\pi}}\int_{0}^{\infty}\overline{\psi^{\pm}(x,k)}f(x)\d x,\qquad \forall f\in C_{\rm c}^{\infty}(\R_+),\ k\geq 0
\end{equation*}
also continuously extend to bounded linear operators on $L^2(\R_+)$, and \eqref{generalized Fourier transform} actually correspond to the adjoints of $\F^{\pm}$. They satisfy $\F^{\pm}\F^{\pm*}={\bf 1}$, $\F^{\pm*}\F^{\pm}={\bf 1}_{\ac}(H)$ and the intertwining relations $\F^{\pm}H=L\F^{\pm}$, where ${\bf 1}_{\ac}(H)={\bf 1}_{\R_+}(H)$ is the projection onto the absolutely continuous subspace $\H_{\ac}(H)$ of $H$.

%%%%%%%%%%%%%%%%%%%%%%%%%%%%%%%%%%%%%%%%
%%%%%%%%%%%%%%%%%%%%%%%%%%%%%%%%%%%%%%%%

\begin{theorem}\label{wave operator}
Under \eqref{decay assumption}, the wave operators $W_{\pm}$ for the pair $(H,H_0)$ exist, satisfy $W_{\pm}=\F^{\pm*}\F_{s}$ and are complete.
\end{theorem}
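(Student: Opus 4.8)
The plan is to establish the three assertions in turn — existence via a standard time-dependent argument, the stationary identity $W_{\pm}=\F^{\pm*}\F_{s}$ carrying the bulk of the work, and completeness then being a purely algebraic consequence of the mapping properties of $\F^{\pm}$ and $\F_{s}$ already recorded above. For existence I would invoke Cook's method on the dense set of vectors $f=\F_{s}g$ with $g\in C_{\rm c}^{\infty}(\R_+)$. Differentiating gives $\frac{\d}{\d t}\e^{itH}\e^{-itH_0}f=i\,\e^{itH}v(X)\e^{-itH_0}f$, so it suffices to check $\int_{0}^{\pm\infty}\|v(X)\e^{-itH_0}f\|\,\d t<\infty$. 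Since $H_0$ is the Dirichlet Laplacian, $\e^{-itH_0}$ inherits (through odd extension to $\R$) the dispersive behaviour of the free propagator: for such $f$ the packet $\e^{-itH_0}f$ concentrates near $x\sim 2|t|$ and decays rapidly off the light cone. Combined with \eqref{decay assumption}, this yields an integrable bound of the form $\|v(X)\e^{-itH_0}f\|\lesssim(1+|t|)^{-\rho}$, which requires only $\rho>1$; hence $W_{\pm}$ exist on a dense set and extend by continuity.

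To identify the limit, I would use the intertwining relations $\F_{s}H_0=L\F_{s}$ and $H\F^{\pm*}=\F^{\pm*}L$ together with $\F_{s}^2={\bf 1}$ and $\F^{\pm}\F^{\pm*}={\bf 1}$. For $f=\F_{s}g$ these give the identity $\e^{itH}\e^{-itH_0}f=\F^{\pm*}\F_{s}f+\e^{itH}(\F_{s}-\F^{\pm*})\e^{-itL}g$, so, $\e^{itH}$ being unitary, the claim reduces to showing $\|(\F_{s}-\F^{\pm*})\e^{-itL}g\|\to0$ as $t\to\pm\infty$ for $g\in C_{\rm c}^{\infty}(\R_+)$. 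Treating $W_-$ (the case $W_+$ being the complex conjugate with $t\to+\infty$), I would insert \eqref{regular by Jost}, \eqref{scattering matrix} and $\theta=\theta_0+p$ to obtain the decomposition $\sin(kx)-\psi^{-}(x,k)=-\frac{1}{2i}\bigl((s(k)-1)\e^{ikx}+s(k)p(x,k)-\overline{p(x,k)}\bigr)$, and split $\F_{s}-\F^{-*}=B_1+B_2$ accordingly, with $B_1$ carrying the plane wave $(s(k)-1)\e^{ikx}$ and $B_2$ the remainder in $p$.

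The term $B_2$ is handled softly: because $g$ is supported in a compact subset $[a,b]\subset\R_+$, the high-energy bound \eqref{estimate1} shows that $B_2$ restricted to frequencies in $[a,b]$ has a square-integrable kernel, hence is Hilbert--Schmidt; since $\e^{-itL}g\rightharpoonup0$ weakly (Riemann--Lebesgue after the substitution $u=k^2$), compactness upgrades this to $\|B_2\e^{-itL}g\|\to0$. The term $B_1$ is the main obstacle: it is a non-compact oscillatory operator, and $[B_1\e^{-itL}g](x)$ equals, up to a constant, $\int_{0}^{\infty}(s(k)-1)g(k)\,\e^{i(kx-tk^2)}\,\d k$. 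Since $s$ is smooth on $\R_+$, the amplitude lies in $C_{\rm c}^{\infty}(\R_+)$, and for $x>0$, $t\to-\infty$ the phase has $k$-derivative $x-2tk\geq c|t|>0$ on the support, so a non-stationary-phase / integration-by-parts argument gives rapid decay and $\|B_1\e^{-itL}g\|_{L^2(\R_+)}\to0$. Intuitively the scattered packet propagates into $x<0$, so its restriction to the half-line vanishes; this is precisely where the sign of $t$, and thus the distinction between $W_-$ and $W_+$, enters, and where a genuine propagation estimate rather than a soft argument is needed.

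Finally, completeness is immediate once $W_{\pm}=\F^{\pm*}\F_{s}$ is known. Since $H_0$ has purely absolutely continuous spectrum we have ${\bf 1}_{\ac}(H_0)={\bf 1}$, and using $\F^{\pm}\F^{\pm*}={\bf 1}$, $\F^{\pm*}\F^{\pm}={\bf 1}_{\ac}(H)$ and the unitarity of $\F_{s}$ on $L^2(\R_+)$, one computes $W_{\pm}^*W_{\pm}=\F_{s}\F^{\pm}\F^{\pm*}\F_{s}={\bf 1}$ and $W_{\pm}W_{\pm}^*=\F^{\pm*}\F^{\pm}={\bf 1}_{\ac}(H)$. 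Thus $W_{\pm}$ are isometries on $L^2(\R_+)$ with range exactly $\H_{\ac}(H)$, which is precisely the completeness statement.
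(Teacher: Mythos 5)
Your identification step is essentially the paper's own proof: the same reduction via the intertwining relations to showing $\|(\F^{-*}-\F_s)\e^{-itL}g\|\to0$ as $t\to-\infty$, the same splitting of the kernel as in \eqref{kernel} into a plane-wave part carrying $s(k)-1$ and a remainder carrying $p(x,k)$, with the remainder handled through \eqref{estimate1} on the compact $k$-support of $g$ (your ``Hilbert--Schmidt plus weak convergence'' argument is an equivalent packaging of the paper's Riemann--Lebesgue plus dominated convergence), and the plane-wave part handled by non-stationary phase for $t\to-\infty$, which is exactly what the paper's citation of \cite[Lem. 0.4.9]{Y1} accomplishes; the completeness computation from $\F^{\pm}\F^{\pm*}={\bf 1}$, $\F^{\pm*}\F^{\pm}={\bf 1}_{\ac}(H)$ and $\F_s^2={\bf 1}$ is also as intended. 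Your Cook's-method preamble is redundant --- the identification argument already yields existence of the limit --- but it is not wrong.

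The one real gap is in the justification of the $B_1$ estimate: you assert that $s$ is smooth on $\R_+$, so that the amplitude $(s(k)-1)g(k)$ lies in $C_{\rm c}^{\infty}(\R_+)$ and admits repeated integration by parts. Under \eqref{decay assumption} with $\rho>2$ this is not available: each $k$-derivative of the Jost solution costs a moment of $v$, and only $\int_0^{\infty}(1+y)|v(y)|\,\d y<\infty$ is guaranteed, so $w$, and hence $s$, is (roughly) $C^1$ on $(0,\infty)$ but need not be $C^2$ when $\rho\in(2,3]$. The gap is repairable in two ways. (i) A single integration by parts, which $C^1$ regularity does allow, already gives $\bigl|[B_1\e^{-itL}g](x)\bigr|\leq C(1+x+|t|)^{-1}$, whose $L^2_x(\R_+)$-norm is $O(|t|^{-1/2})\to0$; rapid decay is not needed. (ii) Cleaner: multiplication by $s-1$ commutes with $\e^{-itL}$, so $B_1\e^{-itL}g$ is a constant times $T\e^{-itL}\bigl[(s-1)g\bigr]$, where $T$ is the bounded integral operator with kernel $\e^{ikx}$; since the family $T\e^{-itL}$ is uniformly bounded in $t$, a density argument reduces the claim to amplitudes in $C_{\rm c}^{\infty}(\R_+)$, and no regularity of $s$ is used at all. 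With either repair your proof is complete.
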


%%%%%%%%%%%%%%%%%%%%%%%%%%%%%%%%%%%%%%%%
%%%%%%%%%%%%%%%%%%%%%%%%%%%%%%%%%%%%%%%%

\begin{remark}
The conclusion of Theorem \ref{wave operator} is true for more general class, e.g. $v\in L^1(\R_+)$, see \cite[Thm. 4.2.8 and Rem. 2.9, p.176]{Y1}. 
\end{remark}

%%%%%%%%%%%%%%%%%%%%%%%%%%%%%%%%%%%%%%%%
%%%%%%%%%%%%%%%%%%%%%%%%%%%%%%%%%%%%%%%%

We recall the main idea of the proof for $W_-$ but the one for $W_+$ is almost the same. For $f=\F_{s}g$ with $g\in C_{\rm c}^{\infty}(\R_+)$, by virtue of intertwining property one has
\begin{equation*}
\|\e^{itH}\e^{-itH_0}f-\F^{-*}\F_{s}f\|_{L^2(\R_+)}=\|(\F^{-*}-\F_{s})\e^{-itL}g\|_{L^2(\R_+)}.
\end{equation*}
Hence, it suffices to prove that the $L^2$-norm of the function
\begin{equation}\label{integral}
[(\F^{-*}-\F_{s})\e^{-itL}g](x)=\sqrt{\frac{2}{\pi}}\int_0^{\infty}\left(\psi^-(x,k)-\sin(kx)\right)\e^{-itk^2}g(k)\d k
\end{equation}
tends to $0$ as $t\to-\infty$. By using \eqref{regular by Jost} and \eqref{scattering matrix},  one has
\begin{align}\notag
\sqrt{\frac{2}{\pi}}\left(\psi^-(x,k)-\sin(kx)\right)&=\sqrt{\frac{2}{\pi}}\left(\frac{\e^{ikx}}{2i}\bigl(s(k)-1\bigr)\right)+\sqrt{\frac{2}{\pi}}\frac{1}{2i}\left\{p(x,k)s(k)-\overline{p(x,k)}\right\}\\ 
&=:F_1(x,k)+F_2(x,k).\label{kernel}
\end{align}
One can prove that the norm of the term in \eqref{integral} defined by the kernel $F_1$ tends to $0$ as $t\to-\infty$ by the standard argument \cite[Lem. 0.4.9]{Y1} already used in the proof of the invariance principle. By using the estimate \eqref{estimate1}, there exists $C_g>0$ such that for $k\in\supp g$ and $x\in\R_+$,
\begin{equation*}
|F_2(x,k)|\leq C_g\int_{x}^{\infty}|v(y)|\d y.
\end{equation*}
Since $x\mapsto \int_{x}^{\infty}|v(y)|\d y$ belongs to $L^2(\R_+)$, the norm of the term defined by $F_2$ in \eqref{integral} tends to $0$ as $|t|\to\infty$ by the Riemann-Lebesgue Lemma and the Lebesgue dominated convergence theorem. 

%%%%%%%%%%%%%%%%%%%%%%%%%%%%%%%%%%%%%%%%
%%%%%%%%%%%%%%%%%%%%%%%%%%%%%%%%%%%%%%%%

It follows from Theorem \ref{wave operator} that the scattering operator $S=W_{+}^*W_-$ is a unitary operator on $L^2(\R_+)$, and since $s\psi^-=\psi^+$, the equality
\begin{equation}\label{stationary scattering matrix}
S=\F_s s\F_s
\end{equation}
holds, where $s$ denotes the multiplication operator by the scattering matrix $k\mapsto s(k)$.

%%%%%%%%%%%%%%%%%%%%%%%%%%%%%%%%%%%%%%%%
%%%%%%%%%%%%%%%%%%%%%%%%%%%%%%%%%%%%%%%%

The decomposition \eqref{kernel} is our starting point for an explicit formula of the wave operator. Indeed, if we set $F_{j}$ for the integral operator with the kernel $F_j(x,k)$, $j=1,2$, then
\begin{equation}\label{decomp as operator}
W_-=\F^{-*}\F_s={\bf 1}+F_1\F_s+F_2\F_s.
\end{equation}
By letting $\F_c$ denote the \emph{Fourier cosine transform}, one obtains $F_1\F_s=\frac{1}{2i}(\F_c\F_s+i{\bf 1})(S-{\bf 1})$. Moreover, one can deduce from \cite[eq. (4.29)]{DR} and from some trigonometric identities,
\begin{equation}\label{function of A}
\frac{1}{2i}(\F_c\F_s+i{\bf 1})=\frac{1}{i\e^{\pi A}+1}=:\phi(A),
\end{equation}
where $A$ is the generator of the dilation group on $L^2(\R_+)$, that is, $[\e^{-itA}f](x):=\e^{t/2}f(\e^t x)$ for $f\in L^2(\R_+)$ and $x\in\R_+$. Here, we have changed the sign of the generator of dilation group from that in \cite[p.875]{DR}. Finally, by combining \eqref{stationary scattering matrix} with \eqref{function of A}, we obtain the expression \eqref{expression of wo} from \eqref{decomp as operator} with $K:=F_2\F_{s}$.

%%%%%%%%%%%%%%%%%%%%%%%%%%%%%%%%%%%%%%%%
%%%%%%%%%%%%%%%%%%%%%%%%%%%%%%%%%%%%%%%%

\begin{remark}
When we consider $W_+$ in place of $W_-$, the adjoint $S^*$ naturally appears. Similarly, one obtains the expression
$$
W_+={\bf 1}+\psi(A)(S^*-{\bf 1})+F_2'\F_s
$$
where $\psi(A)=(1-i\e^{-\pi A})^{-1}$. The kernel of $F_2'$ is given by $F_{2}(x,k)\overline{s(k)}$ and is therefore a Hilbert-Schmidt operator.  We can also see this from $W_+=W_-S^*$
\end{remark}

%%%%%%%%%%%%%%%%%%%%%%%%%%%%%%%%%%%%%%%%
%%%%%%%%%%%%%%%%%%%%%%%%%%%%%%%%%%%%%%%%

We provide a detailed analysis of the remainder term $K$ in Section 5. Note that in the case $\rho>5/2$, the proof of the compactness can be very simplified. Indeed, it follows from \eqref{decay assumption}, \eqref{estimate1} and \eqref{estimate2} that there exists $C>0$ such that for any $x\geq0$
\begin{align}\label{estimate of K2}
|F_2(x,k)|\leq \sqrt{\frac{2}{\pi}}\bigl|p(x,k)\bigr|\leq C\begin{cases}
k^{-1}(1+x)^{-(\rho-1)} & \text{if}\ k>1,\\
(1+x)^{-(\rho-2)} & \text{if}\ 0\leq k\leq 1.
\end{cases}
\end{align}
Here, we used the triangle inequality and that $|s(k)|=1$. Hence, $F_2$ belongs to $L^p(\R_+\times \R_+)$ for $p>1$ if $\rho>2+1/p$. In particular,  it follows that $F_2$ is a Hilbert-Schmidt operator provided $\rho>5/2$, and so is $K$.

%%%%%%%%%%%%%%%%%%%%%%%%%%%%%%%%%%%%%%%%%%%%%%%%%%%%%%
%%%%%%%%%%%%%%%%%%%%%%%%%%%%%%%%%%%%%%%%%%%%%%%%%%%%%%

\section{Topological version of Levinson's theorem}

In this section, we provide a $C^*$-algebraic framework, which has already appeared in \cite{KR}, and give a topological interpretation of the usual Levinson's theorem \eqref{classical Levinson}. In the following, $C(\overline{\R})$ and $C_0(\R)$ denote the $C^*$-algebras of continuous functions on $\R$ having limits at $\pm\infty$ and of continuous functions vanishing at $\pm\infty$, respectively.

%%%%%%%%%%%%%%%%%%%%%%%%%%%%%%%%%%%%%%%%
%%%%%%%%%%%%%%%%%%%%%%%%%%%%%%%%%%%%%%%%

Note first that our assumption on the decay of the potential is sufficient for the absence of singular continuous spectrum for the operator $H$. Together with the finiteness of the number of eigenvalues of $H$, this implies that $W_-$ is a Fredholm operator.

%%%%%%%%%%%%%%%%%%%%%%%%%%%%%%%%%%%%%%%%
%%%%%%%%%%%%%%%%%%%%%%%%%%%%%%%%%%%%%%%%

We define the $C^*$-algebra $\E_{\square}$ on $L^2(\R_+)$ by
\begin{equation}\label{C*-algebra E}
\E_{\square}:=C^*\Bigl( a(A)b(B)\ \Bigr|\ a,b\in C(\overline{\R}) \Bigr),
\end{equation}
where $B:=\frac{1}{2}\ln(H_0)$ is a rescaled energy operator. Here, the right hand side of \eqref{C*-algebra E} stands for the $C^*$-algebra generated by operators of the form $a(A)b(B)$ with $a,b\in C(\overline{\R})$. Since the Weyl canonical commutation relation $\e^{itB}\e^{isA}=\e^{-ist}\e^{isA}\e^{itB}$ holds and the multiplicities of the spectra $\sigma(B)$ and $\sigma(A)$ are one,  the pair $(B,A)$ is unitarily equivalent to the pair $(X,D)$ on $L^2(\R)$, where $X$ and $D=-i\d/\d x$ are the position and momentum operators on $\R$, respectively.

%%%%%%%%%%%%%%%%%%%%%%%%%%%%%%%%%%%%%%%%
%%%%%%%%%%%%%%%%%%%%%%%%%%%%%%%%%%%%%%%%

As a consequence, the set $\K=K\left(L^2(\R_+)\right)$ of compact operators on $L^2(\R_+)$ is isomorphic to 
\begin{equation*}
C^*\Bigl( a(A)b(B)\ \Bigr|\ a,b\in C_0(\R) \Bigr).
\end{equation*}
One easily observes that $\K$ is an ideal of $\E_{\square}$ and the quotient morphism $\pi_{\square}:\E_{\square}\to\E_{\square}/\K\cong C(\square)$ is given by
\begin{equation*}
\pi_{\square}\bigl(a(A)b(B)\bigr)=\Bigl(a(-\infty)b, b(-\infty)a, a(+\infty)b, b(+\infty)a\Bigr),
\end{equation*}
where the topological space $\square$ is defined to be the boundary of the compact space $\overline{\R}\times\overline{\R}$. 

%%%%%%%%%%%%%%%%%%%%%%%%%%%%%%%%%%%%%%%%
%%%%%%%%%%%%%%%%%%%%%%%%%%%%%%%%%%%%%%%%

We define the rescaled scattering matrix $S(\beta):=s(\e^{\beta})$ for $\beta\in\R$. Then, we can write
\begin{equation}
W_-={\bf 1}+\phi(A)\bigl(S(B)-{\bf 1}\bigr)+K,
\end{equation}
therefore $W_-\in\E_{\square}$, and $\pi_{\square}(W_-)=(\Gamma_1,\Gamma_2,\Gamma_3,\Gamma_4)$ is given by
\begin{align*}
\Gamma_1(\beta)&:=S(\beta),\\
\Gamma_2(\alpha)&:=\begin{cases}
1 & \text{if}\ w(0)\neq 0,\\
\tanh(\pi\alpha)+i\cosh(\pi\alpha)^{-1} & \text{if}\ w(0)=0,
\end{cases}\\
\Gamma_3(\beta)&:=1\qquad \text{and} \qquad \Gamma_4(\alpha):=1
\end{align*}
for $\alpha,\beta\in\R$. Here, we have used the low-energy asymptotic of $s(k)$ provided in Section 2.

%%%%%%%%%%%%%%%%%%%%%%%%%%%%%%%%%%%%%%%%
%%%%%%%%%%%%%%%%%%%%%%%%%%%%%%%%%%%%%%%%

One can easily observe that $\square$ is homeomorphic to the unit circle $\S^1$. Hence, for a given non-vanishing function $\Gamma\in C(\square)$, the winding number $\wn_{\square}(\Gamma)\in\Z$ of the closed curve $\Gamma(\square)\subset\C\setminus\{0\}$ is well-defined. Here, we turn around $\square$ clockwise as a convention (see Figure \ref{square}). Then, we obtain the following index formula by applying the Gohberg-Krein index theorem:

%%%%%%%%%%%%%%%%%%%%%%%%%%%%%%%%%%%%%%%%
%%%%%%%%%%%%%%%%%%%%%%%%%%%%%%%%%%%%%%%%

\begin{theorem}[topological version of Levinson's theorem] \label{topLevinson2}
Under the assumption \eqref{decay assumption} with $\rho>2$, 
\begin{equation}\label{Levinson2}
\wn_{\square}\Bigl(\pi_{\square}(W_-)\Bigr)=-\Index(W_-),
\end{equation}
where $\Index(\cdot)$ denotes the Fredholm index.
\end{theorem}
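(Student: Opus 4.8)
The plan is to derive \eqref{Levinson2} from the Gohberg-Krein index theorem attached to the extension
\[
0 \longrightarrow \K \longrightarrow \E_{\square} \longrightarrow C(\square) \longrightarrow 0 .
\]
Before invoking it, I would first check that the left-hand side is well-defined. As recalled just before the statement, the absence of singular continuous spectrum of $H$ together with the finiteness of its eigenvalues guarantees that $W_-$ is Fredholm. Its image in the Calkin algebra $B(L^2(\R_+))/\K$ is therefore invertible; since $\E_{\square}/\K \cong C(\square)$ sits inside the Calkin algebra as a unital $C^*$-subalgebra and invertibility in a $C^*$-algebra is inherited from any larger $C^*$-algebra, the symbol $\pi_{\square}(W_-) = (\Gamma_1,\Gamma_2,\Gamma_3,\Gamma_4)$ is a non-vanishing element of $C(\square)$. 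As $\square \cong \S^1$, the integer $\wn_{\square}(\pi_{\square}(W_-))$ is well-defined.

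Second, I would invoke the index theorem for $\E_{\square}$ in the form of the Gohberg-Krein theorem. By the Weyl relations recalled above, the pair $(B,A)$ is unitarily equivalent to the position-momentum pair $(X,D)$ on $L^2(\R)$, so $\E_{\square}$ is unitarily equivalent to the standard $C^*$-algebra generated by the operators $a(X)b(D)$ with $a,b\in C(\overline{\R})$, whose quotient by the compacts is again $C(\square)$. For this algebra the Gohberg-Krein index theorem asserts that every Fredholm element $T$ has Fredholm index equal to minus the winding number of its symbol, i.e. $\Index(T) = -\wn_{\square}(\pi_{\square}(T))$, the sign being fixed by the clockwise orientation of $\square$ adopted in Figure \ref{square}. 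Taking $T = W_-$ and inserting the symbol computed above yields \eqref{Levinson2} directly, without recourse to the $K$-theoretic description of the underlying index map.

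The step I expect to demand the most care is not the abstract machinery, which is standard once $\E_{\square}$ has been identified with the position-momentum algebra, but the bookkeeping of orientation and sign. One must verify that traversing $\square$ clockwise, under the homeomorphism $\square \cong \S^1$ implicit in the four contributions $\Gamma_1,\ldots,\Gamma_4$, really returns $-\Index(W_-)$ rather than $+\Index(W_-)$. A clean way to settle this is to test the identity on a single explicit Fredholm element of $\E_{\square}$ whose index and symbol winding can both be read off by hand, thereby normalizing the Gohberg-Krein map once and for all; the general case then follows from the homotopy invariance of both sides, both being integer-valued and stable under continuous deformations within the Fredholm elements of $\E_{\square}$.
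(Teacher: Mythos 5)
Your proposal is correct and follows essentially the same route as the paper: membership $W_-\in\E_{\square}$ via the explicit formula \eqref{formula of the wo} (with the compact remainder $K$), the symbol computation $\pi_{\square}(W_-)=(\Gamma_1,\Gamma_2,\Gamma_3,\Gamma_4)$, the unitary equivalence of $(B,A)$ with $(X,D)$ through the Weyl relation, and the Gohberg--Krein index theorem applied to the extension $0\to\K\to\E_{\square}\to C(\square)\to 0$ with the clockwise orientation of $\square$. Your added device for fixing the sign --- normalizing on one explicit Fredholm element and using homotopy invariance of both sides --- is a sound standard refinement, but it is not a departure from the paper's argument.
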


%%%%%%%%%%%%%%%%%%%%%%%%%%%%%%%%%%%%%%%%
%%%%%%%%%%%%%%%%%%%%%%%%%%%%%%%%%%%%%%%%

Theorem \ref{topLevinson2} indeed gives a reformulation of the usual Levinson's theorem \eqref{classical Levinson}. We first observe that 
$$
\Index(W_-)=\dim\ker\bigl(W_{-}\bigr)-\dim\text{coker}\bigl(W_{-}\bigr)=0-\dim\H_{\pp}(H)=-N,
$$
by the asymptotic completeness of $W_-$, and therefore the right hand side in the equality \eqref{Levinson2} is the number of eigenvalues of $H$. On the other hand, by taking $S(\beta)=\exp\bigl(-2i\eta(\e^{\beta})\bigr)$ into account, one computes the contributions $wn(\Gamma_j)$ of $\Gamma_j$ for $j=1,2,3,4$ to the winding number as follows:
\begin{align*}
wn(\Gamma_1)&=\frac{-2\eta(\infty)-\bigl(-2\eta(0)\bigr)}{2\pi}=-\frac{\eta(\infty)-\eta(0)}{\pi},\\
wn(\Gamma_2)&:=\begin{cases}
0 & \text{if}\ w(0)\neq 0,\\
-1/2 & \text{if}\ w(0)=0,
\end{cases}\\
wn(\Gamma_j)&=wn(1)=0\ \text{for}\ j=3,4.
\end{align*}
Here, in the case $w(0)=0$, $wn(\Gamma_2)$ is computed by the formula $\frac{1}{2\pi i}\int_{-\infty}^{\infty}\Gamma_2^*(\alpha)\Gamma_2'(\alpha)\d\alpha$ since $\Gamma_2$ is continuously differentiable. According to the orientation of $\square$, the winding number is given by
\begin{align*}
\wn_{\square}\Bigl(\pi_{\square}(W_-)\Bigr)&=-wn(\Gamma_1)+wn(\Gamma_2)+wn(\Gamma_3)-wn(\Gamma_4)=\begin{cases}
\frac{\eta(\infty)-\eta(0)}{\pi} &\text{if}\ w(0)\neq0,\\
\frac{\eta(\infty)-\eta(0)}{\pi}-\frac{1}{2} &\text{if}\ w(0)=0.
\end{cases}
\end{align*}
Hence, the relation \eqref{classical Levinson} can be obtained by rearranging the equality \eqref{Levinson2}.

%%%%%%%%%%%%%%%%%%%%%%%%%%%%%%%%%%%%%%%%
%%%%%%%%%%%%%%%%%%%%%%%%%%%%%%%%%%%%%%%%

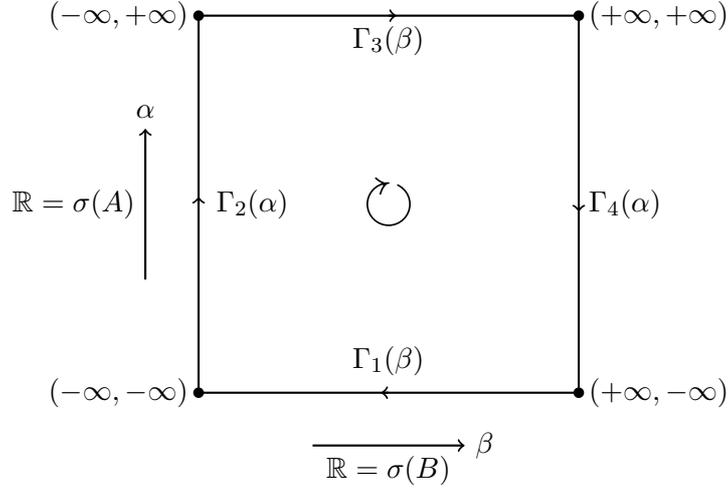
\begin{figure}[htb]
\centering
\begin{tikzpicture}

\draw (0,0) node[left]{$(-\infty,-\infty)$};
\fill (0,0) circle [radius=0.07];

\draw (0,5) node[left]{$(-\infty,+\infty)$};
\fill (0,5) circle [radius=0.07];

\draw (5,0) node[right]{$(+\infty,-\infty)$};
\fill (5,0) circle [radius=0.07];

\draw (5,5) node[right]{$(+\infty,+\infty)$};
\fill (5,5) circle [radius=0.07];

\draw (2.5,-0.7) node[below]{$\R=\sigma(B)$};
\draw (-0.7,2.5) node[left]{$\R=\sigma(A)$};

\draw (2.5,2.5) node{{\Huge$\circlearrowright$}};

\draw[thick, =10pt] (0,5)--(0,2.5);
\draw[thick,<- =10pt] (0,2.6)--(0,0);
\draw[thick, =10pt] (0,0)--(2.5,0);
\draw[thick,<- =10pt] (2.4,0)--(5,0);
\draw[thick, =10pt] (5,0)--(5,2.5);
\draw[thick,<- =10pt] (5,2.4)--(5,5);
\draw[thick, =10pt] (5,5)--(2.5,5);
\draw[thick, <- =10pt] (2.6,5)--(0,5);

\draw (2.5,0.1) node[above]{$\Gamma_1(\beta)$};
\draw (5,2.5) node[right]{$\Gamma_4(\alpha)$};
\draw (2.5,5) node[below]{$\Gamma_3(\beta)$};
\draw (0.1,2.5) node[right]{$\Gamma_2(\alpha)$};

\draw[thick, ->= 10pt] (1.5,-0.7)--(3.5,-0.7) node[right] {$\beta$};
\draw[thick, ->= 10pt] (-0.7,1.5)--(-0.7,3.5) node[above] {$\alpha$};
\end{tikzpicture}

\caption{The boundary $\square$ of $\overline{\R}\times\overline{\R}$ and its orientation.}\label{square}

\end{figure}

%%%%%%%%%%%%%%%%%%%%%%%%%%%%%%%%%%%%%%%%
%%%%%%%%%%%%%%%%%%%%%%%%%%%%%%%%%%%%%%%%

\begin{remark}
 One can also directly verify the unitary equivalence between $(B,A)$ and $(X,D)$. We set $\mathscr{R}:=\mathscr{R}_0\F_s: L^2(\R_+)\to L^2(\R)$, where $\mathscr{R}_0:L^2(\R_+)\to L^2(\R)$ is defined by $[\mathscr{R}_0f](\beta):=\e^{\beta/2}f(\e^\beta)$ for $f\in L^2(\R_+)$ and $\beta\in\R$. Then, one can easily see that $\mathscr{R}$ is a unitary operator and satisfies $\mathscr{R}\e^{-itB}=\e^{-itX}\mathscr{R}$ and $\mathscr{R}\e^{-itA}=\e^{-itD}\mathscr{R}$ for any $t\in\R$ by using the relation $\F_s A=-A\F_s$.
\end{remark}

\begin{remark}
Our investigation on the present model is motivated from the well-known fact that our assumption on the potential prevents any singular continuous spectrum and an infinite number of bound states for $H$. However, we can also observe this fact from formula \eqref{formula of the wo}. The Atkinson's characterization states that a bounded operator $a\in B(\H)$ on a Hilbert space $\H$ is Fredholm if and only if $\pi(a)\in B(\H)/K(\H)$ is invertible, where $\pi:B(\H)\to B(\H)/K(\H)$ is the quotient map. Since $\pi(W_-)=\pi_{\square}(W_-)$ is invertible in $C(\square)\cong\E/\K\subset B\left(L^2(\R_+)\right)/\K$, $W_-$ is a Fredholm operator on $L^2(\R_+)$. Hence, $W_-$ can never have infinite dimensional kernel or cokernel. $H$ has therefore no singular continuous spectrum nor infinite number of bound states.
\end{remark}

%%%%%%%%%%%%%%%%%%%%%%%%%%%%%%%%%%%%%%%%
%%%%%%%%%%%%%%%%%%%%%%%%%%%%%%%%%%%%%%%%

\begin{remark}
Note that in \cite{Y2}, the low-energy asymptotic of the scattering matrix $s(k)$ for a potential decreasing slowly, i.e. $v(x)\sim v_0 x^{-\rho}$, $\rho\in (0,2)$ as $x\to\infty$, is established. Since the situation is rather complicated for $\rho\in(0,1)$ due to the long-rangeness, let us recall the following result for $\rho\in(1,2)$: the phase shift $\eta$ associated with $v$ satisfies
\begin{equation}\label{asymptotic of eta for long-range}
\eta(k)= \eta_0 k^{1-2/\rho}+O(1)\qquad \text{as}\ k\downarrow 0,
\end{equation}
where $\pm\eta_0>0$ if $\pm v_0>0$. Hence, the scattering matrix $s(k)$ is oscillating near $k=0$.

%%%%%%%%%%%%%%%%%%%%%%%%%%%%%%%%%%%%%%%%
%%%%%%%%%%%%%%%%%%%%%%%%%%%%%%%%%%%%%%%%

In \cite{IR,NPR}, it has been proved that the wave operators associated with a specific potential of the form $x^{-2}$ belong to the $C^*$-algebra $\E_\A$ generated by $a(A)b(B)$ with $a\in C(\overline{\R})$ and $b$ belongs to a $C^*$-subalgebra $\A\subset C_{\rm b}(\R)$. $\A$ canonically contains the scattering matrix in the rescaled energy. In the case, when the potential creates the infinitely many bound states, the role of the ideal $\K$ is played by the ideal $\J_\A$ generated by $a(A)b(B)$ with $a\in C_0(\R)$ and $b\in \A$. It is then natural to wonder if we can prove an explicit formula for $W_-$ similar to \eqref{expression of wo} for slowly decreasing potentials with the remainder term belonging to $\J_{\A}$ for some suitable $\A\subset C_{\rm b}(\R)$. This question is at present far from being solved, but the affirmative answer would allow one to obtain a $K$-theoretic index formula by using the boundary map in $K$-theory, as explained in \cite[Sect. 5]{IR} (see also \cite[Sect. 4.2 and 4.3]{R}). In view of \eqref{asymptotic of eta for long-range}, such an algebra $\A$ should contain $C(\overline{\R})$, and any function $f\in C_{\rm b}(\R)$ satisfying that there is a continuous periodic function $f_-$ such that $f(\beta)\sim f_-(\e^{\beta(1-2/\rho)})$ as $\beta\to-\infty$, in a suitable sense.

\end{remark}

%%%%%%%%%%%%%%%%%%%%%%%%%%%%%%%%%%%%%%%%%%%%%%%%%%%%%%
%%%%%%%%%%%%%%%%%%%%%%%%%%%%%%%%%%%%%%%%%%%%%%%%%%%%%%

\section{Analysis of the remainder term}

In this last section, we prove the compactness of the remainder term $K=F_2\F_s$ in \eqref{formula of the wo}. In the following, we will denote integral operators by the same symbols as their kernels.

%%%%%%%%%%%%%%%%%%%%%%%%%%%%%%%%%%%%%%%%
%%%%%%%%%%%%%%%%%%%%%%%%%%%%%%%%%%%%%%%%

By virtue of the integral equation \eqref{Volterra for Jost}, we first observe that 
\begin{align*}
F_2(x,k)&=\sqrt{\frac{2}{\pi}}\frac{1}{2i}\left(p(x,k)s(k)-\overline{p(x,k)}\right)\\
&=\sqrt{\frac{2}{\pi}}\Im\left(p(x,k)\e^{-i\eta(k)}\right)\e^{-i\eta(k)}\\
&=\sqrt{\frac{2}{\pi}}\int_x^{\infty}\frac{\sin \bigl(k(x_1-x)\bigr)}{k}v(x_1)\Im\left(\theta(x_1,k)\e^{-i\eta(k)}\right)\d x_1\times\e^{-i\eta(k)}\\
&=\sqrt{\frac{2}{\pi}}\int_x^{\infty}\frac{\sin \bigl(k(x_1-x)\bigr)}{k}v(x_1)\Im\left(p(x_1,k)\e^{-i\eta(k)}\right)\d x_1\times\e^{-i\eta(k)}\\
&\qquad +\sqrt{\frac{2}{\pi}}\int_x^{\infty}\frac{\sin \bigl(k(x_1-x)\bigr)}{k}v(x_1)\sin\bigl(kx_1-\eta(k)\bigr)\d x_1\times\e^{-i\eta(k)}.
\end{align*}
Therefore, by iterating this procedure one obtains the following decomposition of the kernel $F_2$ for each $N\in\N$:
\begin{equation}\label{decomp of K_2}
F_2(x_0,k)=\left\{p_N(x_0,k)+R_N(x_0,k)\right\}\e^{-i\eta(k)},\qquad \forall x_0,k\in\R_+
\end{equation}
where
\begin{align*}
p_N(x_0,k)=\sqrt{\frac{2}{\pi}}\int_{x_0}^{\infty}\d x_1\cdots\int_{x_{N-1}}^{\infty}\d x_{N}\left(\prod_{j=1}^{N}\frac{\sin \bigl(k(x_j-x_{j-1})\bigr)}{k}v(x_j)\right)\Im\left(p(x_N,k)\e^{-i\eta(k)}\right)
\end{align*}
and
$$
R_N(x_0,k)=\sum_{n=1}^{N}r_n(x_0,k)
$$
with
\begin{align*}
r_n(x_0,k)=\sqrt{\frac{2}{\pi}}\int_{x_0}^{\infty}\d x_1\cdots\int_{x_{n-1}}^{\infty}\d x_{n}\left(\prod_{j=1}^{n}\frac{\sin \bigl(k(x_j-x_{j-1})\bigr)}{k}v(x_j)\right)\sin\bigl(kx_n-\eta(k)\bigr).
\end{align*}
Hence, $K=\left\{p_N\F_s+R_N\F_s\right\}\e^{-i\eta(\sqrt{H_0})}$ by taking into account the intertwining relation between $H_0$ and $\F_s$. 

%%%%%%%%%%%%%%%%%%%%%%%%%%%%%%%%%%%%%%%%
%%%%%%%%%%%%%%%%%%%%%%%%%%%%%%%%%%%%%%%%

\begin{lemma}\label{compactness of p_N}
For any $\rho>2$ there exists $N\in\N$ such that $p_N\in L^2(\R_+\times\R_+)$, and accordingly the first term $p_N\F_s\e^{-i\eta(\sqrt{H_0})}$ in the remainder term $K$ is Hilbert-Schmidt on $L^2(\R_+)$.
\end{lemma}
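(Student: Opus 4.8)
The plan is to reduce everything to a single kernel estimate: I will show that for $N$ large enough $p_N\in L^2(\R_+\times\R_+)$. Granting this, $p_N$ is the integral operator associated with a square-integrable kernel, hence Hilbert--Schmidt; and since $\F_s$ is unitary and $\e^{-i\eta(\sqrt{H_0})}$ is unitary (it is the functional calculus of the self-adjoint operator $\sqrt{H_0}$ applied to the unimodular function $k\mapsto\e^{-i\eta(k)}$), the composition $p_N\F_s\e^{-i\eta(\sqrt{H_0})}$ stays in the Hilbert--Schmidt ideal. Thus the whole content of the lemma is the bound $\|p_N\|_{L^2(\R_+\times\R_+)}<\infty$.

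To estimate $p_N(x_0,k)$ I would bound its integrand factor by factor on the nested region $x_0\le x_1\le\cdots\le x_N$. For the oscillatory factors I use $|\sin(k(x_j-x_{j-1}))/k|\le\min\{x_j-x_{j-1},\,k^{-1}\}$, for the potential I use \eqref{decay assumption}, and for the tail I use $|\Im(p(x_N,k)\e^{-i\eta(k)})|\le|p(x_N,k)|$ together with the Jost estimates \eqref{estimate1} and \eqref{estimate2}. The key structural point is that each iteration of the Volterra equation has inserted one extra factor $v(x_j)$, so the integrand carries $N$ independent decaying weights; combined with the ordered domain, each successive one-dimensional integration, via $\int_x^{\infty}(1+y)^{-a}\,\d y\le C_a(1+x)^{-(a-1)}$ for $a>1$, transfers decay down to $x_0$, and the resulting decay rate grows linearly with $N$. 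I would then split the $k$-integration into the two regimes $k\ge1$ and $0<k\le1$. For $k\ge1$, using $|\sin(\cdot)/k|\le k^{-1}$ and \eqref{estimate1} gives
\[
|p_N(x_0,k)|\le\frac{C}{k^{N+1}}\int_{x_0}^{\infty}\!\!\cdots\!\int_{x_{N-1}}^{\infty}\Bigl(\prod_{j=1}^{N}(1+x_j)^{-\rho}\Bigr)(1+x_N)^{-(\rho-1)}\,\d x_N\cdots\d x_1\le\frac{C}{k^{N+1}}(1+x_0)^{-(N+1)(\rho-1)},
\]
and $\int_1^{\infty}k^{-2(N+1)}\,\d k<\infty$. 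For $0<k\le1$, using $|\sin(\cdot)/k|\le x_j-x_{j-1}\le 1+x_j$ and \eqref{estimate2} gives a $k$-independent bound
\[
|p_N(x_0,k)|\le C\int_{x_0}^{\infty}\!\!\cdots\!\int_{x_{N-1}}^{\infty}\Bigl(\prod_{j=1}^{N}(1+x_j)^{-(\rho-1)}\Bigr)(1+x_N)^{-(\rho-2)}\,\d x_N\cdots\d x_1\le C(1+x_0)^{-(N+1)(\rho-2)}.
\]
Here $\rho>2$ is exactly the condition guaranteeing that every one of these iterated integrals converges. Adding the two contributions, $p_N\in L^2(\R_+\times\R_+)$ as soon as $(1+x_0)^{-(N+1)(\rho-2)}\in L^2(\R_+)$, i.e. $2(N+1)(\rho-2)>1$, which holds for any $N>\tfrac{1}{2(\rho-2)}-1$.

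The main obstacle is the low-energy regime: there the oscillation cannot be used to gain powers of $k$, so all the decay must come from the spatial weights, and it is precisely the borderline assumption $\rho>2$ that keeps each nested integral finite while the $x_0$-decay accumulates with $N$. The remaining work is the careful bookkeeping of exponents through the $N$ iterated integrations, which I have compressed into the one-step inequality above; everything else, namely the reduction to the kernel estimate and the stability of the Hilbert--Schmidt class under multiplication by the unitaries $\F_s$ and $\e^{-i\eta(\sqrt{H_0})}$, is routine.
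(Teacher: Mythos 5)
Your proposal is correct and follows essentially the same route as the paper: the two-regime bound $|\sin(k(x_j-x_{j-1}))/k|\le\min\{x_j-x_{j-1},k^{-1}\}$ combined with \eqref{decay assumption} and the Jost estimates \eqref{estimate1}--\eqref{estimate2}, iterated integration over the nested domain yielding exactly the paper's estimate \eqref{estimate of p_N}, and the choice $2(N+1)(\rho-2)>1$. The only difference is cosmetic: you spell out the one-step integral inequality and the unitarity of $\F_s$ and $\e^{-i\eta(\sqrt{H_0})}$, which the paper leaves implicit.
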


%%%%%%%%%%%%%%%%%%%%%%%%%%%%%%%%%%%%%%%%
%%%%%%%%%%%%%%%%%%%%%%%%%%%%%%%%%%%%%%%%

\begin{proof}
By the assumption \eqref{decay assumption}, one can prove that there exists $C_0>0$ such that for $x_N\geq\ldots\geq x_1\geq x_0\geq 0$
\begin{align}\label{messy estimate}
\prod_{j=1}^N\left| \frac{\sin\bigl(k(x_j-x_{j-1})\bigr)}{k}v(x_j)\right|\leq C_0\begin{cases}
 k^{-N}\prod_{j=1}^N(1+x_j)^{-\rho} & \text{if}\ k>1,\\
 \prod_{j=1}^N(1+x_j)^{-(\rho-1)} & \text{if}\ 0\leq k\leq 1.
\end{cases}
\end{align}
By the estimates \eqref{estimate of K2} and \eqref{messy estimate}, one has
\begin{align*}
|p_N(x_0,k)|&\leq\int_{x_0}^{\infty}\d x_1\cdots\int_{x_{N-1}}^{\infty}\d x_{N}\left|\prod_{j=1}^{N}\frac{\sin \bigl(k(x_j-x_{j-1})\bigr)}{k}v(x_j)\right|\times\sqrt{\frac{2}{\pi}}|p(x_N,k)|\\
&\leq C\int_{x_0}^{\infty}\d x_1\cdots\int_{x_{N-1}}^{\infty}\d x_{N}\begin{cases}
 k^{-N}\prod_{j=1}^N(1+x_j)^{-\rho} \times k^{-1}(1+x_N)^{-(\rho-1)} & \text{if}\ k>1,\\
 \prod_{j=1}^N(1+x_j)^{-(\rho-1)} \times (1+x_N)^{-(\rho-2)} & \text{if}\ 0\leq k\leq 1.
\end{cases}
\end{align*}
Finally, by direct computations one obtains 
\begin{align}\label{estimate of p_N}
|p_N(x,k)|\leq C\begin{cases}
k^{-(N+1)}(1+x)^{-(N+1)(\rho-1)} & \text{if}\ k>1,\\
(1+x)^{-(N+1)(\rho-2)} & \text{if}\ 0\leq k\leq 1,
\end{cases}
\end{align}
for any $x\geq 0$. Hence, if we take $N\in\N$ so that $2(N+1)(\rho-2)>1$, then $p_N\in L^2(\R_+\times\R_+)$.
\end{proof}

%%%%%%%%%%%%%%%%%%%%%%%%%%%%%%%%%%%%%%%%
%%%%%%%%%%%%%%%%%%%%%%%%%%%%%%%%%%%%%%%%

$p_N$ is a good part in the remainder term while we need a special treatment for $R_N$.  Indeed, since we need to deal with the kernel $r_n$ for small $n$, the above argument does not work for the second term $R_N\F_s\e^{-i\eta\left(\sqrt{H_0}\right)}$, and therefore the operator $r_n$ should be understood as a kind of oscillatory integral operator. 

%%%%%%%%%%%%%%%%%%%%%%%%%%%%%%%%%%%%%%%%
%%%%%%%%%%%%%%%%%%%%%%%%%%%%%%%%%%%%%%%%

Our approach is based on the following observation: if we set $V_v(x):=\int_x^{\infty}v(y)\d y$, then $V_v$ is differentiable and has the derivative equal to $-v$ almost everywhere, and  by an integration by parts and a change of variables one obtains 
\begin{align}\label{observation}
r_1(x,k)&=\sqrt{\frac{2}{\pi}}\int_{x}^{\infty}V_v(y)\sin\bigl(k(2y-x)-\eta(k)\bigr)\d y=\frac{1}{2}\sqrt{\frac{2}{\pi}}\int_{x}^{\infty}V_v\left(\frac{x+y}{2}\right)\sin \bigl(ky-\eta(k)\bigr)\d y.
\end{align}
Moreover, by using Fubini theorem one can see that $r_1$ equals the product of an integral operator with the kernel $\frac{1}{2}V_v\left(\frac{x+y}{2}\right)Y(y-x)$, which belongs to $L^2(\R_+\times\R_+)$, and a bounded integral operator with the kernel $\sin (ky-\eta(k))$. Here, $Y$ denotes the Heaviside function on $\R$.

%%%%%%%%%%%%%%%%%%%%%%%%%%%%%%%%%%%%%%%%
%%%%%%%%%%%%%%%%%%%%%%%%%%%%%%%%%%%%%%%%

The singular part $k^{-1}$ in the kernel $r_n(x,k)$ might be avoided by iterating similar computation with several integration by parts, and one might obtain the compactness of $r_n$. However, one gets a much more complicated dependence on the potential $v$ in the formula of $r_n$ for $n=2,3,\ldots$, after such computations (see Remark \ref{final remark} at the end of this section). 

%%%%%%%%%%%%%%%%%%%%%%%%%%%%%%%%%%%%%%%%
%%%%%%%%%%%%%%%%%%%%%%%%%%%%%%%%%%%%%%%%

In order to deal with this, we introduce an algebraic framework for integral operators of the type $r_n$. For $p=1,2$, let us consider the complex vector space
$$
\V_{p}:=\bigcup_{\varepsilon>0}L^{\infty}_{p+\varepsilon}(\R_+) \qquad \text{with} \qquad L^{\infty}_{p+\varepsilon}(\R_+):=\left\{V\in L^{\infty}(\R_+) \mid  \|(1+\cdot)^{p+\varepsilon}V(\cdot)\|_{L^{\infty}(\R_+)}<\infty \right\}.
$$
For $V_j\in\V_{1}$, $j=1,2$ we set
$$
V_1\star V_2(x):=\int_{x}^{\infty}V_1(y)V_2(y)\d y,\qquad x\in\R_+.
$$
One can easily check that $V_1\star V_2\in\V_{1}$. As a consequence, $\V_1$ becomes a non-associative commutative $\C$-algebra under $\star$. Furthermore, motivated from the computations for $r_1$, we define $V_u(x):=\int_{x}^{\infty}u(y)\d y$ for any $u\in\V_2$. Then, $V_u\in\V_{1}$ and it has the derivative equal to $-u$ almost everywhere in $\R_+$.

%%%%%%%%%%%%%%%%%%%%%%%%%%%%%%%%%%%%%%%%
%%%%%%%%%%%%%%%%%%%%%%%%%%%%%%%%%%%%%%%%

\begin{lemma}\label{step1}
Let $n\in\N$. For $V_1,\ldots,V_n\in\V_1$, let $W[V_1,\ldots,V_n]$ be the integral operator with the kernel
\begin{align*}
W[V_1,\ldots,V_n](x_0,k)&:=\sqrt{\frac{2}{\pi}}\int_{x_0}^{\infty}\d x_1\cdots\int_{x_{n-1}}^{\infty}\d x_{n}\left(\prod_{j=1}^{n}V_{j}(x_j)\right)\\
&\qquad\qquad\times\sin\left(k\left(2\sum_{\ell=0}^{n-1}(-1)^{\ell}x_{n-\ell}+(-1)^{n}x_0\right)-\eta(k)\right).
\end{align*}
Then, the operator $W[V_1,\ldots,V_n]$ extends continuously to a Hilbert-Schmidt operator on $L^2(\R_+)$.
\end{lemma}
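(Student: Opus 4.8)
The plan is to disentangle the oscillatory $k$--dependence from the rest of the kernel by factoring $W[V_1,\dots,V_n]$ as a composition $M_n\circ T$, in which $T$ is a fixed bounded operator carrying all the $k$--behaviour and $M_n$ is a Hilbert--Schmidt operator with a $k$--independent kernel. The starting observation is that $k$ enters the kernel only through the single factor $\sin(k\Phi_n-\eta(k))$, where $\Phi_n:=2\sum_{\ell=0}^{n-1}(-1)^\ell x_{n-\ell}+(-1)^n x_0$ is the total phase. Introducing the successive gaps $u_j:=x_j-x_{j-1}\ge 0$, a direct computation grouping the terms of $\Phi_n$ in consecutive pairs gives $\Phi_n=x_0+2\sigma$, where $\sigma$ is the sum of those $u_j$ with $j\equiv n\pmod 2$; in particular $0\le\sigma\le x_n-x_0$, so that $\Phi_n\ge x_0$ on the whole integration simplex.

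First I would substitute the innermost variable $x_n$ by $y:=\Phi_n$. Since $\partial\Phi_n/\partial x_n=2$ this is admissible, with $\d x_n=\tfrac12\,\d y$, and the image of the region $x_n\ge\cdots\ge x_1\ge x_0$ lies in $\{y\ge x_0\}$ because $\Phi_n\ge x_0$. Applying Fubini's theorem to move the $y$--integration to the outside then rewrites the kernel as
\[
W[V_1,\dots,V_n](x_0,k)=\int_{x_0}^{\infty}G_n(x_0,y)\,\sqrt{\tfrac{2}{\pi}}\,\sin\bigl(ky-\eta(k)\bigr)\,\d y,
\]
where $G_n(x_0,y)$ is the remaining $(n-1)$--fold integral of $\prod_{j=1}^{n}V_j(x_j)$, with $x_n$ now expressed through $y$ and $x_0,\dots,x_{n-1}$. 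This identity exhibits $W[V_1,\dots,V_n]=M_n\circ T$, with $M_n$ the integral operator of kernel $G_n$ and $T$ the integral operator of kernel $\sqrt{2/\pi}\,\sin(ky-\eta(k))$; it is the operator-level analogue of the elementary computation \eqref{observation} for $r_1$.

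The boundedness of $T$ is then easy: expanding $\sin(ky-\eta(k))=\cos\eta(k)\sin(ky)-\sin\eta(k)\cos(ky)$ expresses $T$ as $\F_s$ applied after multiplication by the bounded function $\cos\eta$, minus $\F_c$ applied after multiplication by the bounded function $\sin\eta$. Since $\F_s$ and $\F_c$ are unitary and $|\cos\eta|,|\sin\eta|\le 1$, one obtains $\|T\|\le 2$. It therefore remains to show that $M_n$ is Hilbert--Schmidt, i.e. that $G_n\in L^2(\R_+\times\R_+)$, and this is the crux of the argument.

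For the required $L^2$--bound I would use the decay $|V_j(x)|\le C_j(1+x)^{-(1+\varepsilon_j)}$ coming from $V_j\in\V_1$. The decomposition $\Phi_n=x_0+2\sigma$ is exactly what makes this tractable: since $x_n\ge x_0+\sigma=(x_0+y)/2$, the factor $V_n$ contributes the decay $(1+(x_0+y)/2)^{-(1+\varepsilon_n)}$ in the new variable $y$, while integrating the remaining factors $\prod_{j<n}|V_j(x_j)|$ over the nested simplex $x_0\le x_1\le\cdots\le x_{n-1}$ produces a bound of the form $C(1+x_0)^{-(\varepsilon_1+\cdots+\varepsilon_{n-1})}$. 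Carrying out the $\d y$--integration of $|G_n|^2$ first turns the $y$--decay into an extra factor $(1+x_0)^{-(1+2\varepsilon_n)}$, after which the $\d x_0$--integral converges. The genuine work --- and the only real obstacle --- is this bookkeeping of the decay: one must track how it distributes between the output variable $x_0$ and the auxiliary variable $y$, and the elementary inequality $(1+a+b)^{-s}\le(1+a)^{-s/2}(1+b)^{-s/2}$ for $a,b\ge0$ is a convenient tool should the two decays need to be separated more delicately. Once $G_n\in L^2$ is secured, $M_n$ is Hilbert--Schmidt and hence so is $W[V_1,\dots,V_n]=M_n\circ T$.
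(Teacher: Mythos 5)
Your proposal is correct and takes essentially the same route as the paper: the paper also factors $W[V_1,\ldots,V_n]$ into a $k$-independent Hilbert--Schmidt kernel composed with the bounded operator $\Phi\F_s$, $\Phi=\cos\bigl(\eta(\sqrt{H_0})\bigr)-\F_c\F_s\sin\bigl(\eta(\sqrt{H_0})\bigr)$, which after the intertwining relation is exactly your $T$ with kernel $\sqrt{2/\pi}\,\sin\bigl(ky-\eta(k)\bigr)$, and it produces the kernel by the iterated substitutions $y_j=2x_j-y_{j-1}$, whose final variable is precisely your total phase $\Phi_n$, then proves square-integrability via the same AM--GM-type inequality you invoke. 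Your single substitution $x_n\mapsto\Phi_n$ together with the identity $\Phi_n=x_0+2\sigma$, and your direct bookkeeping of the decay (pulling out $V_n$ via $x_n\geq(x_0+y)/2$ and bounding the remaining simplex integral by $C(1+x_0)^{-(\varepsilon_1+\cdots+\varepsilon_{n-1})}$), is a mildly streamlined but equivalent implementation, not a different method.
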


%%%%%%%%%%%%%%%%%%%%%%%%%%%%%%%%%%%%%%%%
%%%%%%%%%%%%%%%%%%%%%%%%%%%%%%%%%%%%%%%%

\begin{proof}
Note first that by a change of variables, one can prove that for any $x_0,k\in\R_+$
\begin{align*}
&2^n\sqrt{\frac{\pi}{2}}\times W[V_1,\ldots,V_n](x_0,k)\\
&=\begin{cases}
\displaystyle \int_{x_0}^{\infty} V_1\left(\frac{x_1+x_0}{2}\right)\sin\bigl(kx_1-\eta(k)\bigr) \d x_1 & \text{if}\ n=1,\\
\displaystyle \int_{x_0}^{\infty} V_{1}\left(\frac{x_1+x_{0}}{2}\right)\int_{x_0}^{\infty} V_{2}\Bigl(\frac{x_2+x_{1}}{2}\Bigr)\sin\bigl(kx_2-\eta(k)\bigr) \d x_2\d x_1 &\text{if}\ n=2,\\
\displaystyle \int_{x_0}^{\infty}\d x_1\int_{x_0}^{\infty}\d x_2\int_{x_1}^{\infty}\d x_3\cdots\int_{x_{n-2}}^{\infty}\d x_{n}\left(\prod_{j=1}^{n}V_{j}\left(\frac{x_j+x_{j-1}}{2}\right)\right)\sin\bigl(kx_n-\eta(k)\bigr) & \text{if}\ n>2.
\end{cases}
\end{align*}
Let us prove this in the case $n=3$ for the convenience of the reader. We do changes of the variables first as $y_1=2x_1-x_0$, second $y_2=2x_2-y_1$ and finally $y_3=2x_3-y_2$:
\begin{align*}
&W[V_1,V_2,V_3](x_0,k)\\
&=\sqrt{\frac{2}{\pi}}\int_{x_0}^{\infty}\d x_1\int_{x_1}^{\infty}\d x_2\int_{x_{2}}^{\infty}\d x_{3}\left(\prod_{j=1}^{3}V_{j}(x_j)\right)\sin\Bigl(k\left(2x_3-2x_2+2x_1-x_0\right)-\eta(k)\Bigr)\\
&=\sqrt{\frac{2}{\pi}}\int_{x_0}^{\infty}\frac{\d y_1}{2}\int_{\frac{y_1+x_0}{2}}^{\infty}\d x_2\int_{x_{2}}^{\infty}\d x_{3}V_1\left(\frac{y_1+x_0}{2}\right)\left(\prod_{j=1}^{2}V_{j}(x_j)\right)\sin\Bigl(k\left(2x_3-2x_2+y_1\right)-\eta(k)\Bigl)\\
&=\sqrt{\frac{2}{\pi}}\int_{x_0}^{\infty}\frac{\d y_1}{2}\int_{x_0}^{\infty} \frac{\d y_2}{2}\int_{\frac{y_2+y_1}{2}}^{\infty}\d x_{3}V_1\left(\frac{y_1+x_0}{2}\right)V_2\left(\frac{y_2+y_1}{2}\right)V_3(x_3)\sin\Bigl(k\left(2x_3-y_2\right)-\eta(k)\Bigr)\\
&=\sqrt{\frac{2}{\pi}}\int_{x_0}^{\infty}\frac{\d y_1}{2}\int_{x_0}^{\infty} \frac{\d y_2}{2}\int_{y_1}^{\infty}\frac{\d y_{3}}{2}V_1\left(\frac{y_1+x_0}{2}\right)V_2\left(\frac{y_2+y_1}{2}\right)V_3\left(\frac{y_2+y_3}{2}\right)\sin\bigl( k y_3-\eta(k)\bigr).
\end{align*}

%%%%%%%%%%%%%%%%%%%%%%%%%%%%%%%%%%%%%%%%
%%%%%%%%%%%%%%%%%%%%%%%%%%%%%%%%%%%%%%%%

We then define the kernel $(x,y)\mapsto U[V_1,\ldots,V_n](x,y)$ by
\begin{align*}
&2^{n}\times U[V_1,\ldots,V_n](x_0,x_n)\\
&=\begin{cases}
\displaystyle V_1\left(\frac{x_1+x_0}{2}\right)Y(x_1-x_0)& \text{if}\ n=1,\\
\displaystyle\int_{x_0}^{\infty} V_{1}\left(\frac{x_1+x_{0}}{2}\right)V_{2}\left(\frac{x_2+x_{1}}{2}\right)\d x_1 Y(x_2-x_0)& \text{if}\ n=2,\\
\displaystyle\int_{x_0}^{\infty}\d x_1\int_{x_0}^{\infty}\d x_2\int_{x_1}^{\infty}\d x_3\cdots\int_{x_{n-3}}^{\infty}\d x_{n-1}\left(\prod_{j=1}^{n}V_{j}\left(\frac{x_j+x_{j-1}}{2}\right)\right)Y(x_{n}-x_{n-2}) &\text{if}\ n>2.
\end{cases}
\end{align*}
We also set $\Phi:=\cos\left(\eta\left(\sqrt{H_0}\right)\right)-\F_c\F_s\sin\left(\eta\left(\sqrt{H_0}\right)\right)$.  Then, by using the addition formula for sine and the intertwining relation between $\F_s$ and $H_0$, one obtains
\begin{equation}\label{decomposition of W}
W[V_1,\ldots,W_n]=U[V_1,\ldots,V_n]\Phi\F_s.
\end{equation}

%%%%%%%%%%%%%%%%%%%%%%%%%%%%%%%%%%%%%%%%
%%%%%%%%%%%%%%%%%%%%%%%%%%%%%%%%%%%%%%%%

To obtain an estimate for $U[V_1,\ldots,V_n]$, we take $\varepsilon>0$ sufficiently small such that $V_{j}\in L^{\infty}_{1+\varepsilon}(\R_+)$ for all $j=1,\ldots,n$. Then one has for instance
\begin{align}\label{estimate of U}
\bigl|U[V_1,\ldots,V_n](x_0,x_n)\bigr|\leq C(1+x_0)^{-\frac{1+\varepsilon}{2}-(n-1)\varepsilon}(1+x_n)^{-\frac{1+\varepsilon}{2}} Y(x_n-x_0).
\end{align}
Here, we first applied the AM-GM inequality to the function $1+\frac{x_j+x_{j-1}}{2}$, and then replaced the lower ends of the integrals and $x_{n-2}$ in $Y(x_n-x_{n-2})$ by $x_0$.  Hence, $U[V_1,\ldots,V_n]\in L^{2}(\R_+\times\R_+)$. Since $\Phi$ is a bounded operator on $L^2(\R_+)$, $W[V_1,\ldots,V_n]$ is a Hilbert-Schmidt operator.
\end{proof}

%%%%%%%%%%%%%%%%%%%%%%%%%%%%%%%%%%%%%%%%
%%%%%%%%%%%%%%%%%%%%%%%%%%%%%%%%%%%%%%%%

We define $\W$ and $\U$ to be the (algebraic) linear span of the kernels $W[V_1,\ldots,V_n]$ and $U[V_1,\ldots,V_n]$ for any $V_1,\ldots,V_n\in\V_1$ and any $n\in\N$, respectively. Then, according to Lemma \ref{step1} both $\W$ and $\U$ are vector subspaces of $L^2(\R_+\times\R_+)$. Note that by identifying the set of kernels and the set of integral operators on $L^2(\R_+)$ the equality \eqref{decomposition of W} can be rewritten as
\begin{equation}\label{decomposition of W2}
\W=\U\Phi\F_s.
\end{equation}
It follows from the estimate \eqref{estimate of U} that  for any element $U\in\U$ there exists some $\varepsilon>0$ such that the estimate
\begin{equation}\label{estimate of U2}
\bigl|U(x,y)\bigr|\leq C(1+x)^{-\frac{1+\varepsilon}{2}}(1+y)^{-\frac{1+\varepsilon}{2}}Y(y-x)
\end{equation}
holds.  

%%%%%%%%%%%%%%%%%%%%%%%%%%%%%%%%%%%%%%%%
%%%%%%%%%%%%%%%%%%%%%%%%%%%%%%%%%%%%%%%%

\begin{lemma}\label{step2}
For $W\in\W$ and $u\in \V_2$, set 
\begin{equation}\label{bilinear map}
[\mathfrak{S}_uW](x,k):=\int_{x}^{\infty}\frac{\sin \bigl(k(y-x)\bigr)}{k}u(y)W(y,k)\d y,\qquad x,k\in\R_+.
\end{equation}
Then, $\mathfrak{S}_u$ is a well-defined linear map on $\W$ for any $u\in\V_2$.
\end{lemma}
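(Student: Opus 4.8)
The plan is to prove the stronger statement that $\mathfrak{S}_u$ maps $\W$ into itself, from which well-definedness and linearity both follow at once (linearity being immediate from the linearity of \eqref{bilinear map} in $W$). Since $\W$ is by definition the linear span of the kernels $W[V_1,\dots,V_n]$, it suffices to show that $\mathfrak{S}_u W[V_1,\dots,V_n]\in\W$ for every $n\in\N$ and all $V_1,\dots,V_n\in\V_1$, $u\in\V_2$. The convergence of the integral defining $\mathfrak{S}_u W[V_1,\dots,V_n]$ is guaranteed by the decay $u\in\V_2$ together with the boundedness in $y$ of $W[V_1,\dots,V_n](y,k)$ underlying \eqref{estimate of U2}, so that all manipulations below are justified by absolute convergence and Fubini's theorem.

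The heart of the proof is the recursion
\begin{equation*}
\mathfrak{S}_u W[V_1,\dots,V_n]=W[V_u,V_1,\dots,V_n]-\mathfrak{S}_{V_u V_1}W[V_2,\dots,V_n]\qquad(n\ge 2),
\end{equation*}
together with the directly verified base case $\mathfrak{S}_u W[V_1]=W[V_u,V_1]-W[V_u\star V_1]$. To derive it I would substitute $W[V_1,\dots,V_n](y,k)$ into \eqref{bilinear map} and expand the product of the two sines by the product-to-sum formula, using the evenness of cosine to bring both resulting cosines to the form $\cos\bigl(k(\cdots)-\eta(k)\bigr)$. Exactly one of the two phases is independent of the outer variable $y$; on the $y$-dependent cosine I would integrate by parts in $y$, integrating the factor $u$ to $-V_u$ and differentiating the cosine, so that the factor $2k$ thereby produced cancels the singular $k^{-1}$ and yields a genuine $W$-kernel with one extra potential, namely $W[V_u,V_1,\dots,V_n]$. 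The boundary term at $y=x$ carries a surviving $k^{-1}$, and I would check — exactly as in the model computation \eqref{observation} for $r_1$ — that it cancels against the $y$-independent cosine contribution.

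The single term left over after this cancellation is again singular, but a sum-to-product identity turns the difference of its two remaining cosines into a factor $k^{-1}\sin\bigl(k(x_1-x)\bigr)$ multiplying $V_u(x_1)V_1(x_1)$ and a lower-order oscillatory kernel; this is recognized precisely as $-\mathfrak{S}_{V_u V_1}W[V_2,\dots,V_n]$, where $V_u V_1\in\V_2$ because $V_u,V_1\in\V_1$. With the recursion in hand the lemma follows by induction on $n$: in the inductive step the first term lies in $\W$ by definition, while the second is $\mathfrak{S}$ applied to a kernel of order $n-1$, hence in $\W$ by the induction hypothesis; unfolding the recursion also makes visible that the $\star$-product and the primitive $V_\cdot$ are exactly the operations keeping everything inside $\V_1$. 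I expect the main obstacle to be the bookkeeping in establishing the recursion: tracking the sign pattern of the phase $\Theta_n$ through both parities of $n$, and verifying that the two sources of surviving $k^{-1}$ terms — the integration-by-parts boundary term and the $y$-independent cosine — recombine exactly into the single lower-order $\mathfrak{S}$-term rather than leaving an uncontrolled singular remainder.
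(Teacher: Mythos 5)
Your proposal is correct and takes essentially the same route as the paper: the paper's proof is precisely your induction on $n$, with the base case $\mathfrak{S}_uW[V]=W[V_u,V]-W[V_u\star V]$ and the recursion $\mathfrak{S}_uW[V_1,\ldots,V_{n+1}]=W[V_u,V_1,\ldots,V_{n+1}]-\mathfrak{S}_{V_uV_1}W[V_2,\ldots,V_{n+1}]$ (equations \eqref{for 1} and \eqref{for n+1}), closed by the observation $V_uV_1\in\V_2$. The only difference is organizational: the paper integrates by parts on the full product first (so no boundary term arises, since $\sin\bigl(k(x_1-x_0)\bigr)$ vanishes at $x_1=x_0$) and then recombines terms via the sine addition formula, whereas you split by product-to-sum first and consequently must cancel a cosine boundary term against part of the $y$-independent contribution via Fubini before reassembling the leftover by sum-to-product --- an algebraically equivalent, slightly longer computation.
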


%%%%%%%%%%%%%%%%%%%%%%%%%%%%%%%%%%%%%%%%
%%%%%%%%%%%%%%%%%%%%%%%%%%%%%%%%%%%%%%%%

\begin{proof}
The linearity is obvious. Hence, it suffices to prove the well-definedness for generators of the form $W[V_1,\ldots,V_n]$, and therefore we prove it by induction for $n$. Let $V\in\V_1$. Then, by an integration by parts with respect to $x_1$ one has for any $x_0,k\in\R_+$,
\begin{align*}
&\Bigl[\mathfrak{S}_uW[V]\Bigr](x_0,k)= \int_{x_0}^{\infty}\frac{\sin \bigl(k(x_1-x_0)\bigr)}{k}u(x_1)W[V](x_1,k)\d x_1\\
&=\sqrt{\frac{2}{\pi}}\int_{x_0}^{\infty}\frac{\sin \bigl(k(x_1-x_0)\bigr)}{k}u(x_1)\int_{x_1}^{\infty}V(x_2)\sin\Bigl( k(2x_2-x_1)-\eta(k)\Bigr)\d x_2\d x_1\\
&=\sqrt{\frac{2}{\pi}}\int_{x_0}^{\infty}\cos \bigl(k(x_1-x_0)\bigr)V_u(x_1)\int_{x_1}^{\infty}V(x_2)\sin\Bigl( k(2x_2-x_1)-\eta(k)\Bigr)\d x_2\d x_1\\
&\qquad -\sqrt{\frac{2}{\pi}}\int_{x_0}^{\infty}\sin \bigl(k(x_1-x_0)\bigr)V_u(x_1)\int_{x_1}^{\infty}V(x_2)\cos\Bigl( k(2x_2-x_1)-\eta(k)\Bigr)\d x_2\d x_1\\
&\qquad -\sqrt{\frac{2}{\pi}}\int_{x_0}^{\infty}\frac{\sin \bigl(k(x_1-x_0)\bigr)}{k}V_u(x_1)V(x_1)\sin\bigl( kx_1-\eta(k)\bigr)\d x_1\\
&=\sqrt{\frac{2}{\pi}}\int_{x_0}^{\infty}V_u(x_1)\int_{x_1}^{\infty}V(x_2)\sin\Bigl( k(2x_2-2x_1+x_0)-\eta(k)\Bigr)\d x_2\d x_1 \\
&\qquad -\sqrt{\frac{2}{\pi}}\int_{x_0}^{\infty}\frac{\sin \bigl(k(x_1-x_0)\bigr)}{k}V_u(x_1)V(x_1)\sin\bigl( kx_1-\eta(k)\bigr)\d x_1.
\end{align*}
The first term is equal to $W[V_u,V](x_0,k)$, and again by one integration by parts 
\begin{align*}
&\sqrt{\frac{2}{\pi}}\int_{x_0}^{\infty}\frac{\sin \bigl(k(x_1-x_0)\bigr)}{k}V_u(x_1)V(x_1)\sin\bigl( kx_1-\eta(k)\bigr)\d x_1=W[V_u\star V](x_0,k).
\end{align*}
Therefore, one has 
\begin{equation}\label{for 1}
\mathfrak{S}_uW[V]=W[V_u,V]-W[V_u\star V]\in\W.
\end{equation}

%%%%%%%%%%%%%%%%%%%%%%%%%%%%%%%%%%%%%%%%
%%%%%%%%%%%%%%%%%%%%%%%%%%%%%%%%%%%%%%%%

Now, we assume that the statement is true for $n$ and for any $u\in\V_2$. Let $V_1,\ldots,V_{n+1}\in\V_1$. We shall prove the equality
\begin{equation}\label{for n+1}
\mathfrak{S}_uW[V_1,\ldots,V_{n+1}]=W[V_u,V_1,\ldots,V_{n+1}]-\mathfrak{S}_{V_uV_1}W[V_2,\ldots,V_{n+1}].
\end{equation}
Then, $\mathfrak{S}_uW[V_1,\ldots,V_{n+1}]\in\W$ since $V_uV_1\in \V_{2}$. 

%%%%%%%%%%%%%%%%%%%%%%%%%%%%%%%%%%%%%%%%
%%%%%%%%%%%%%%%%%%%%%%%%%%%%%%%%%%%%%%%%

Let us prove \eqref{for n+1}. By an integration by parts, one has 
\begin{align*}
&\Bigl[\mathfrak{S}_{u}W[V_1,\ldots,V_{n+1}]\Bigr](x_0,k)=\int_{x_0}^{\infty}\frac{\sin \bigl(k(x_1-x_0)\bigr)}{k}u(x_1)W[V_1,\ldots,V_{n+1}](x_1,k)\d x_1\\
&=\int_{x_0}^{\infty}\cos\bigl(k(x_1-x_0)\bigr)V_u(x_1)W[V_1,\ldots,V_{n+1}](x_1,k)\d x_1\\ 
& \qquad +\int_{x_0}^{\infty}\frac{\sin \bigl(k(x_1-x_0)\bigr)}{k}V_u(x_1)\Bigl(\partial_{x_1}W[V_1,\ldots,V_{n+1}]\Bigr)(x_1,k)\d x_1\\
&=:Z_1(x_0,k)+Z_2(x_0,k).
\end{align*}
The derivative $\Bigl(\partial_{x_1}W[V_1,\ldots,V_{n+1}]\Bigr)(x_1,k)$ in the integrand of $Z_2(x_0,k)$ is equal to the sum of $-V_{1}(x_1)W[V_2,\ldots,V_{n+1}](x_1,k)$, which gives the second term in \eqref{for n+1}, and 
\begin{align*}
w(x_1,k):=&k\sqrt{\frac{2}{\pi}}\int_{x_1}^{\infty}\d x_2\cdots\int_{x_{n+1}}^{\infty}\d x_{n+2}\left(\prod_{j=1}^{n+1}V_{j}(x_{j+1})\right)\\
& \qquad \times(-1)^{n+1}\cos\left(k\left(2\sum_{\ell=0}^{n}(-1)^{\ell}x_{n+2-\ell}+(-1)^{n+1}x_1\right)-\eta(k)\right).
\end{align*}

%%%%%%%%%%%%%%%%%%%%%%%%%%%%%%%%%%%%%%%%
%%%%%%%%%%%%%%%%%%%%%%%%%%%%%%%%%%%%%%%%

Now, let us have a look at the trigonometric functions in  the integrand of $Z_1(x_0,k)$ and the expression $k^{-1}\sin\left(k(x_1-x_0)\right)w(x_1,k)$. We have
\begin{align*}
&\cos\bigl(k(x_1-x_0)\bigr)\sin\left(k\left(2\sum_{\ell=0}^{n}(-1)^{\ell}x_{n+2-\ell}+(-1)^{n+1}x_1\right)-\eta(k)\right)\\
&\ \ \ +(-1)^{n+1}\sin\bigl(k(x_1-x_0)\bigr)\cos\left(k\left(2\sum_{\ell=0}^{n}(-1)^{\ell}x_{n+2-\ell}+(-1)^{n+1}x_1\right)-\eta(k)\right)\\
&=\sin\left(k\left(2\sum_{\ell=0}^{n}(-1)^{\ell}x_{n+2-\ell}+(-1)^{n+1}x_1\right)-\eta(k)+(-1)^{n+1}k(x_1-x_0)\right)\\
&=\sin\left(k\left(2\sum_{\ell=0}^{n+1}(-1)^{\ell}x_{n+2-\ell}+(-1)^{n+2}x_0\right)-\eta(k)\right).
\end{align*}
Hence, we have
\begin{align*}
Z_1(x_0,k)+\int_{x_0}^{\infty}\frac{\sin \bigl(k(x_1-x_0)\bigr)}{k}V_u(x_1)w(x_1,k)\d x_1=W[V_u,V_1,\ldots,V_{n+1}](x_0,k).
\end{align*}
This finishes the proof of \eqref{for n+1}

\end{proof}

%%%%%%%%%%%%%%%%%%%%%%%%%%%%%%%%%%%%%%%%
%%%%%%%%%%%%%%%%%%%%%%%%%%%%%%%%%%%%%%%%

\begin{proof}[Proof of Theorem \ref{expression of wo}]
It suffices to prove that the remainder term $K=F_2\F_s$ is Hilbert-Schmidt. By \eqref{observation} one has $r_1=W[V_v]\in\W$. One can also see from the definition that $r_{n+1}=\mathfrak{S}_vr_n$. Therefore, it follows from Lemma \ref{step2} that $r_n$ belongs to $\W$ for any $n\in\N$. By Lemma \ref{step1}, $R_N=\sum_{n=1}^N r_n$ is a Hilbert-Schmidt operator. Hence, together with Lemma \ref{compactness of p_N} we finish the proof.
\end{proof}

%%%%%%%%%%%%%%%%%%%%%%%%%%%%%%%%%%%%%%%%
%%%%%%%%%%%%%%%%%%%%%%%%%%%%%%%%%%%%%%%%

\begin{remark}\label{final remark}
One can easily see from Lemma \ref{step2} that \eqref{bilinear map} defines a bilinear map $\mathfrak{S}:\V_2\times\W\to\W$. Moreover, by combining \eqref{for 1} and \eqref{for n+1} and by induction, one can also prove the equality 
\begin{align}\label{for n}\notag
\mathfrak{S}_uW[V_1,\ldots,V_n]&=W[V_u,V_1,\ldots,V_{n}]+(-1)^1W[V_u\star V_1,V_2,\ldots,V_n]+\cdots\\ \notag
&\qquad +(-1)^{n-1}W\Bigl[\Bigl(\cdots\Bigl(\bigl(V_u\star V_1\bigr)\star V_2\Bigr)\star\cdots\Bigr)\star V_{n-1},V_{n}\Bigr]\\
&\qquad \qquad+(-1)^{n}W\Bigl[\Bigl(\Bigl(\cdots\Bigl(\bigl(V_u\star V_1\bigr)\star V_2\Bigr)\star\cdots\Bigr)\star V_{n-1}\Bigl)\star V_{n}\Bigr]
\end{align}
for any $n$. Note that parentheses in the right hand side of \eqref{for n} are indispensable since the product $\star$ is not associative. This property of $\mathfrak{S}$ is the source of the complicated dependence of $r_n=\mathfrak{S}_v^{n-1}W[V_v]$ on the potential $v$.
\end{remark} 

%%%%%%%%%%%%%%%%%%%%%%%%%%%%%%%%%%%%%%%%%%%%%%%%%%%%%%%%%%%%%%
%%%%%%%%%%%%%%%%%%%%%%%%%%%%%%%%%%%%%%%%%%%%%%%%%%%%%%%%%%%%%%


\begin{thebibliography}{19}

%%%%%%%%%%%%%%%%%%%%%%%%%%%%%%%%%%%%%%%%
%%%%%%%%%%%%%%%%%%%%%%%%%%%%%%%%%%%%%%%%


\bibitem[AK]{AK} T. Aktosun, M. Klaus, \emph{Small-energy asymptotics for the Schr\"odinger equation on the line}, Inverse Problems {\bf 17} no. 4 (2001), 619–632.


%%%%%%%%%%%%%%%%%%%%%%%%%%%%%%%%%%%%%%%%
%%%%%%%%%%%%%%%%%%%%%%%%%%%%%%%%%%%%%%%%


\bibitem[BW]{BW} H. Baumg\"artel, M. Wollenberg, \emph{Mathematical scattering theory}, Akademie-Verlag, Berlin (1983).



%%%%%%%%%%%%%%%%%%%%%%%%%%%%%%%%%%%%%%%%
%%%%%%%%%%%%%%%%%%%%%%%%%%%%%%%%%%%%%%%%



\bibitem[DF]{DF} P. D’Ancona, L. Fanelli, \emph{$L^p$-boundedness of the wave operator for the one dimensional Schr\"odinger operator}, Comm. Math. Phys. {\bf 268} no. 2 (2006), 415–438.


%%%%%%%%%%%%%%%%%%%%%%%%%%%%%%%%%%%%%%%%
%%%%%%%%%%%%%%%%%%%%%%%%%%%%%%%%%%%%%%%%


\bibitem[DR]{DR} J. Derezi\'nski, S. Richard, \emph{On Schr\"odinger operators with inverse square potentials on the half-line}, Ann. Henri Poincar\'e {\bf 18} no. 3 (2017), 869–928.


%%%%%%%%%%%%%%%%%%%%%%%%%%%%%%%%%%%%%%%%
%%%%%%%%%%%%%%%%%%%%%%%%%%%%%%%%%%%%%%%%


\bibitem[GNP]{GNP} F. Gesztesy, R. Nowell, W. P\"otz, \emph{One-dimensional scattering theory for quantum system with nontrivial spatial asymptotics}, Differential Integral Equations {\bf 10} no. 3 (1997), 521--546.


%%%%%%%%%%%%%%%%%%%%%%%%%%%%%%%%%%%%%%%%
%%%%%%%%%%%%%%%%%%%%%%%%%%%%%%%%%%%%%%%%


\bibitem[IR]{IR} H. Inoue, S. Richard, \emph{Index theorems for Fredholm, semi-Fredholm and almost-periodic operators: all in one example}, to appear in J. Noncommut. Geom.


%%%%%%%%%%%%%%%%%%%%%%%%%%%%%%%%%%%%%%%%
%%%%%%%%%%%%%%%%%%%%%%%%%%%%%%%%%%%%%%%%


\bibitem[KR]{KR} J. Kellendonk, S. Richard \emph{On the structure of the wave operators in one-dimensional potential scattering}, Mathematical Physics Electronic Journal {\bf 14} (2008), 1--21.



%%%%%%%%%%%%%%%%%%%%%%%%%%%%%%%%%%%%%%%%
%%%%%%%%%%%%%%%%%%%%%%%%%%%%%%%%%%%%%%%%


\bibitem[NPR]{NPR} F. Nicoleau, D. Parra, S. Richard, \emph{Does Levinson’s theorem count complex eigenvalues?}, J. Math. Phys. {\bf 58} (2017), 102101, 7pp.


%%%%%%%%%%%%%%%%%%%%%%%%%%%%%%%%%%%%%%%%
%%%%%%%%%%%%%%%%%%%%%%%%%%%%%%%%%%%%%%%%


\bibitem[RS]{RS} M. Reed, B. Simon, \emph{Methods of modern mathematical physics}, vol. 3, Academic Press, San Diego, CA, 1979.


%%%%%%%%%%%%%%%%%%%%%%%%%%%%%%%%%%%%%%%%
%%%%%%%%%%%%%%%%%%%%%%%%%%%%%%%%%%%%%%%%


\bibitem[R]{R} S. Richard, \emph{Levinson's theorem: An index theorem in scattering theory}, Operator theory: Advances and applications, Vol. 254 (2016), 149--203, Springer International Publishing.


%%%%%%%%%%%%%%%%%%%%%%%%%%%%%%%%%%%%%%%%
%%%%%%%%%%%%%%%%%%%%%%%%%%%%%%%%%%%%%%%%


\bibitem[Yaf1]{Y2} D. R. Yafaev, \emph{The low energy scattering for slowly decreasing potentials}, Comm. Math. Phys. {\bf 85} (1982), 177--196.


%%%%%%%%%%%%%%%%%%%%%%%%%%%%%%%%%%%%%%%%
%%%%%%%%%%%%%%%%%%%%%%%%%%%%%%%%%%%%%%%%

\bibitem[Yaf2]{Y3} D. R. Yafaev, \emph{Scattering theory: some old and new problems},  Lecture notes in Math. {\bf 1734}, Springer-Verlag, Berlin, 2000.


%%%%%%%%%%%%%%%%%%%%%%%%%%%%%%%%%%%%%%%%
%%%%%%%%%%%%%%%%%%%%%%%%%%%%%%%%%%%%%%%%



\bibitem[Yaf3]{Y1} D. R. Yafaev, \emph{Mathematical scattering theory: Analytic theory}, Amer. Math. Soc. Providence, R. I., 2010.

%%%%%%%%%%%%%%%%%%%%%%%%%%%%%%%%%%%%%%%%
%%%%%%%%%%%%%%%%%%%%%%%%%%%%%%%%%%%%%%%%



\bibitem[Yaj]{Yajima} K. Yajima, \emph{The $W^{k,p}$-continuity of wave operators for Schr\"odinger operators}, J. Math. Soc. Japan {\bf 47}, (1995) 551--581.


%%%%%%%%%%%%%%%%%%%%%%%%%%%%%%%%%%%%%%%%
%%%%%%%%%%%%%%%%%%%%%%%%%%%%%%%%%%%%%%%%


\end{thebibliography}
\end{document}